\documentclass[a4paper, 12pt]{article}
\usepackage{amsfonts}
\usepackage{fancyhdr}
\usepackage{graphicx}
\usepackage{epsfig}
\usepackage{amssymb}
\usepackage{mathrsfs,amsfonts,amsmath}
\usepackage{color}
\usepackage{amsmath}
\allowdisplaybreaks[4]
\makeatletter

\@addtoreset{equation}{section} \makeatother
\newtheorem{Assumption}{Assumption}[section]
\newtheorem{Theorem}[Assumption]{Theorem}
\newtheorem{Condition}[Assumption]{Condition}

\newtheorem{Lemma}[Assumption]{Lemma}

\begin{document}
	\title{\textbf{Stabilization of highly nonlinear hybrid stochastic differential delay equations by periodically intermittent feedback controls based on discrete-time observations with asynchronous switching}}
	\author{Guangqiang Lan\footnote{Corresponding author: Email:
			langq@buct.edu.cn.}\quad and\quad Fansai Meng
		\\ \small School of Science, Beijing University of Chemical Technology, Beijing 100029, China}
	\date{}

	\maketitle
	
\begin{abstract}
In this paper, we will investigate the moment exponential stabilization of highly nonlinear hybrid stochastic differential delay equations. A periodically intermittent controller based on discrete time state observations with asynchronous switching is designed. The upper bound of observation period as well as the lower bound of the control width are all obtained. Firstly, the finiteness and boundedness of the $p$-th moment of the solution are established under a generalized Khasminskii-type condition. Then reasonable conditions of control function, drift and diffusion coefficients are presented. Then exponential stability as well as the convergence rate of controlled system are proved. Finally, an example is presented to interpret the conclusion, which also indicates that the proportion of control interval has positive relation to the convergence rate.		
\end{abstract}
	
\noindent\textbf{Key words:} hybrid stochastic differential delay equations, highly nonlinear, discrete-time observations, periodically intermittent feedback control, exponential stability
	
	\section{Introduction}
	
	\noindent
In reality, hybrid stochastic systems (or stochastic systems with switching) can be used to describe many interested models since the systems may experience abrupt changes in their structure and parameters due to component failures, changing subsystem interconnections and environmental disturbances, etc. Since the stability is a very important property of a given system, it is very natural that stabilization of given unstable systems by feedback controller becomes to one of the important issues of hybrid stochastic systems.

There are already many results for stabilization of hybrid stochastic systems with continuous time or discrete time control. For example, \cite{CXZ} stabilizes neutral
stochastic differential equations by delay feedback control, but they only consider the case when $f$ and $g$ are both linear growing functions, \cite{SFFM} generalizes it to highly nonlinear case, \cite{XM} gives advances in discrete-state-feedback stabilization of highly nonlinear hybrid stochastic functional differential equations by Razumikhin technique, \cite{DM}, \cite{SF} and \cite{WYZ} consider stabilization of highly nonlinear stochastic differential delay equations by variable-delay feedback control, while \cite{FLCA} considers constant delayed control with asynchronous switching by using different methods, \cite{DTM} stabilizes highly nonlinear stochastic differential delay equations with L\'evy noise by time-varying delay feedback control. For the asynchronous switching cases, \cite{shao} investigates stabilization of regime-switching processes by feedback control based on discrete time observations when coefficients of the underline systems are both Lipschitz continuous.

Since the intermittent control has great efficiency and much lower cost compared with the continuous control technique, then stabilization of hybrid stochastic systems with intermittent control has attracted more and more attention.

For instance, \cite{shen} and \cite{HRZ} consider stabilization of highly nonlinear hybrid stochastic differential equations by aperiodic and periodic feedback control, respectively. When there is delay term, \cite{WLS} investigate stabilization for complex-valued hybrid stochastic delayed systems with aperiodically intermittent control by using average technique, however, the diffusion term there needs to be linearly growing. \cite{JHLMM} considers periodically intermittent control for hybrid systems based on discrete observations with time delay, while \cite{WDLY} considers the problem of aperiodically intermittent control for neutral stochastic delay systems, auxiliary controlled hybrid systems are introduced to obtain the stability of the intermittent feedback controlled system, and \cite{MJHM} investigates stabilization of hybrid ordinary differential equations with intermittent stochastic noise, see also \cite{MYJM} for intermittent stochastic stabilization. Moreover, all the coefficients (drift, diffusion and control functions) in the above 3 papers need to be Lipschitz continuous functions.

However, to the best of our knowledge, there is no answer to the stabilization of stochastic differential delay equations with intermittent control when both the drift and the diffusion coefficients are highly nonlinear.

So, in this paper, we will investigate the moment exponential stabilization of highly nonlinear hybrid stochastic differential delay equations by periodically intermittent feedback controls based on discrete-time observations with asynchronous switching. The main contribution lies in the following two aspects:

(1) We consider stabilization of highly nonlinear hybrid stochastic differential delay equations when both the drift term and diffusion terms only satisfy polynomial growth conditions and the control function is intermittent;

(2) The control function depends on the past discrete-time observation states of both system and the switching, which is different from all the above mentioned references.

The rest of the paper is organized as follows. In Section 2, we will introduce the the underline system and some standing Assumptions. In Section 3, moment finiteness and boundness of the controlled systems are obtained under suitable conditions on the coefficients. Then in Section 4, we will carefully design the control function in order to stabilize the underline SDDEs and present the main result and prove it. Finally, an example is provided to interpret the conclusion.

\section{Model description and Assumptions}
	
Throughout this paper, unless otherwise specified, we use the following notation. Let $S=\left\lbrace 1,2,\cdots,N\right\rbrace$ and $R_{+}=\left[ 0,\infty\right)$, where $N$ is a positive integer. For $x\in R_{n}$, $|x|$ denotes its Euclidean norm. If A is a vector or matrix, its transpose is denoted by $A^\textrm{T}$. By $A \le 0$ and $A<0$, we mean $A$ is non-positive and negative definite, respectively.
	If $A$ is a matrix, we let $\left|A\right|  =\sqrt{\textrm{trace}(A^\textrm{T}A)}$ be its trace norm. If $A$ is a symmetric real-valued matrix (i.e. $A^\textrm{T}=A$), denote by $\lambda_{\min(A)} $ and $\lambda_{\max(A)}$ its smallest and largest eigenvalue, respectively.
	
	Let $\left( \Omega,\mathcal{F},\left\lbrace \mathcal{F}_{t}\right\rbrace _{t\geq 0},\mathbb{P}\right)$be a complete probability space with
	a filtration  $\left\lbrace \mathcal{F}_{t}\right\rbrace _{t\geq 0}$ satisfying the usual conditions (that is, it is increasing and right continuous while $\mathcal{F}_{0}$ contains all $\mathbb{P}$-null sets). Let $r(t)$, $t\geq0$, be a right-continuous Markov chain on the probability space taking values in a finite state space $S = \left\lbrace 1, 2, \dots , N\right\rbrace $ with generator $ \varGamma= (\gamma_{ij} )_{N\times N} $ given by
	$$\mathbb{P} \left\lbrace  r\left( t+\Delta\right) =j|r(t)=i\right\rbrace=\left\{\begin{array}{ll}
		\gamma_{ij}\Delta+o\left( \Delta \right),\quad  i\neq j ,
		\\
		1+\gamma_{ij}\Delta+o\left( \Delta \right),\quad 	i= j
	\end{array}
	\right.$$
	where $\Delta>0$.

Here $\gamma_{ij}\geq0$ is the transition rate from $i$ to $j$, if $i\neq j$ while $\gamma_{ii}=-\sum_{j\neq i}\gamma_{ij}$.
	Let $B(t) = \left( B_{1}(t),\dots  , B_{m}(t)\right)^{T}$ be an m-dimensional Brownian motion defined on the probability space.
	We assume that the Markov chain $r\left(\cdot \right)$ is independent of the Brownian motion $B\left(\cdot \right)$. We stress that almost all sample paths of $r\left( t\right) $ are right continuous. It is well known that almost all sample paths of $r\left( t\right) $ are piecewise constant except for a finite number of simple jumps in any finite subinterval of $R_{+}$. If $A$ is a subset of $\Omega$, denote by $I_{A}$ its indicator function;
	that is, $I_{A}\left( w\right) =1$ if $w \in A $ and $0$ otherwise.
	If both $a$, $b$ are real numbers, then $a \wedge b=\min\left\lbrace a,b\right\rbrace$ and $a\vee b=\max\left\lbrace a,b\right\rbrace$. For $\tau>0$, denote by $C\left( \left[-\tau, 0\right] ; R^{n}\right) $ the family of continuous functions $\varphi$ from $\left[ -\tau, 0\right]  \to R^{n}$ with the norm $\left| \left| \varphi\right| \right|  = \sup_{-\tau\le u \le0} \left| \varphi\left( u\right) \right| $.
	$ \mathbb{E}$  is the expectation with respect to the probability measure $ \mathbb{P}$. For a non-negative real number $ a $, let $ \left[ a\right]  $ denote the integer part of $ a $.
	Consider an unstable hybrid SDDE:
	\begin{equation}\label{sde}dx(t)
		=f(x(t),x(t-h(t)),r(t),t)dt+g(x(t),x(t-h(t)),r(t),t)dB(t)
	\end{equation}
	on $t\geq 0$ with initial value $x_{0}=\xi(t)\in C\left( \left[-\tau, 0\right] ; R^{n}\right)$ and $r\left( 0\right) =r_{0}$, where
	$f:\mathbb{R}^n\times S \times \mathbb{R}_{+} \to \mathbb{R}^n$
	and $g:\mathbb{R}^n \times S \times \mathbb{R}_{+} \to \mathbb{R}^{n\times m}$ are Borel measurable functions, and $h\left( t \right)$ is a Borel measurable function from ${\mathbb{R}_+ }$ to $\left[ {h',\tau } \right]$. We aim to design the intermittent control function $uI(t)$ and the gap of the discrete time state observation $\delta>0$ to make it stable. Thus, the controlled system is as follows:
	\begin{equation}\label{jsde}\aligned dx(t)
		&=(f(x(t),x(t-h(t)),r(t),t)+u(x(v(t)),r(v(t)),t)I(t))dt\\&
+g(x(t),x(t-h(t)),r(t),t)dB(t)\endaligned
	\end{equation}
	where $v(t)=\left[ t/\delta\right]\delta$ and
	$I(t)=\sum_{n=0}^\infty I_{\left[nT,nT+\theta\right)}(t), \theta \in\left[0,T\right].$

To make the model more general, throughout this paper, suppose the variable time delay function satisfies

\begin{Assumption}\label{delay}
\[{h^*}:= \mathop {\lim }\limits_{\Delta  \to {0^+ }} \sup \left( {\mathop {\sup }\limits_{s \ge  - \tau } \frac{{\mu \left( {{I_{s,\Delta }}} \right)}}{\Delta }} \right)<\infty \]
here $h^*$ is a non-negative constant, ${I_{s,\Delta }} = \left\{ {t \in {\mathbb{R}_+ }:t - h\left( t \right) \in \left[ {s,s + \Delta } \right)} \right\}$, $\mu ( \cdot )$ indicates the Lebesgue measure on the ${\mathbb{R}_+ }$.
\end{Assumption}

We need the following result introduced in \cite{DM}.
\begin{Lemma}\label{lemma}
Let Assumption 1 holds. Then for  any continuous function $\varphi:\left[ { - \tau ,T - h'} \right] \to {\mathbb{R}_+ }, T>h'>0,$ it follows that
$$\int_0^T\varphi\left({t-h\left( t \right)} \right)dt \le {h^* }\int_{-\tau}^{T-h'} \varphi\left(t\right)dt$$
\end{Lemma}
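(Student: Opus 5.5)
The plan is a change of variables $s=t-h(t)$, made rigorous by approximating the integral with Riemann-type sums that Assumption \ref{delay} controls. Fix a continuous $\varphi\ge 0$ on $[-\tau,T-h']$. Since $h(t)\in[h',\tau]$, for $t\in[0,T]$ we have $t-h(t)\in[-\tau,T-h']$, so the integrand is defined. I would read the statement as allowing a limiting constant: the inequality with $h^*$ comes from letting the mesh go to zero. Alternatively, one can interpret $h^*$ as any constant dominating $\mu(I_{s,\Delta})/\Delta$ for all small $\Delta$.

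\textbf{Step 1 (partition).} Let $\Delta=(T-h'+\tau)/K$ and $s_k=-\tau+k\Delta$ for $k=0,\dots,K-1$. The sets $A_k=\{t\in[0,T]: t-h(t)\in[s_k,s_k+\Delta)\}$, together with the null or boundary set where $t-h(t)=T-h'$, partition $[0,T]$. They are measurable because $h$ is Borel. Each satisfies $A_k\subset I_{s_k,\Delta}$. We then get
$$\int_0^T\varphi(t-h(t))\,dt\le\sum_{k}\mu(A_k)\,M_k\le\sum_k \mu(I_{s_k,\Delta})\,M_k,$$
where $M_k=\max_{[s_k,s_k+\Delta]}\varphi$. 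The endpoint $t-h(t)=T-h'$ can be absorbed by slightly enlarging the last interval.

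\textbf{Step 2 (use the assumption).} Given $\varepsilon>0$, Assumption \ref{delay} provides $\Delta_0$ such that for all $\Delta<\Delta_0$ we have $\sup_s\mu(I_{s,\Delta})/\Delta\le h^*+\varepsilon$. Hence
$$\int_0^T\varphi(t-h(t))\,dt\le (h^*+\varepsilon)\sum_k M_k\Delta.$$
The sum on the right is an upper Darboux sum of $\varphi$. Because $\varphi$ is continuous on a compact interval, it is uniformly continuous, so this sum converges to $\int_{-\tau}^{T-h'}\varphi(s)\,ds$ as $K\to\infty$. Letting $K\to\infty$ and then $\varepsilon\to0$ gives the claim.

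\textbf{Main obstacle.} The only delicate point is the measurability of the sets $A_k$ and the exact reading of the $\limsup$ in Assumption \ref{delay}, namely that it yields a uniform-in-$s$ bound for all sufficiently small $\Delta$. Both are immediate from the definitions, so I expect the proof to be short.
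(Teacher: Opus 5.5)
Your proof is correct. The paper does not prove this lemma itself but imports it from \cite{DM}, and your argument is the standard step-function/Riemann-sum argument behind that result: split $[0,T]$ by the level sets of $t-h(t)$, bound each piece by $\mu(I_{s_k,\Delta})\max_{[s_k,s_k+\Delta]}\varphi\le (h^*+\varepsilon)\Delta\max_{[s_k,s_k+\Delta]}\varphi$, let the upper Darboux sums converge, then send $\varepsilon\to0$. One cosmetic point: you do not need to enlarge the last interval, because $t-h(t)=T-h'$ forces $t=T$ and $h(T)=h'$, so the exceptional set is at most the single point $\{T\}$ and is null.
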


Obviously, $h^*\ge1.$

Moreover, the coefficients are assumed to satisfy the following assumption.
	
	\begin{Assumption}\label{one}
		Assume that both the coefficients $f$ and $g$ in (\ref{sde}) are locally Lipschitz continuous, that is, for each $b>0$ there is $K_b>0$ (depending on $b$) such that
		\begin{equation}\label{local}|f(x,y,i,t)-f(\bar{x},\bar{y},i,t)|
			\vee|g(x,y,i,t)-g(\bar{x},\bar{y},i,t)|\le K_{b}(|x-\bar{x}|+|y-\bar{y}|)
		\end{equation}
		for all	$|x|\vee|y|\vee|\bar{x}|\vee|\bar{y}|\le b$  and all $\left( i,t\right) \in S\times R_{+}$.
	\end{Assumption}

\begin{Assumption}\label{polynomial}
Assume also that there exist three constants $K>0, q_{1}>1$ and $q_{2}\geq1 $such that
		\begin{equation}\label{duoxiangshi}|f(x,y,i,t)|
			\le K(|x|+|y|+|x|^{q_{1}}+|y|^{q_2})\quad and\quad |g(x,y,i,t)|
			\le K(|x|+|y|+|x|^{q_{3}}+|y|^{q_4})
		\end{equation}
		for all $\left( x,y,i,t\right) \in R^{n}\times R^n\times S\times R_{+}$.
	\end{Assumption}
	
	Condition (\ref{duoxiangshi}) forces that $f\left(0,0,i,t\right)\equiv 0$ and $g\left(0,0,i,t\right)\equiv 0$,
	which are required for the stability purpose of this paper. Of
	course, we also need a Khasminskii type condition, which guarantees the nonexplosion of the unique global solution of the
	hybrid SDE (\ref{jsde}).
	\begin{Assumption}\label{Khas}
		Assume that there exist positive constants $p, q, q_i, i=1,\cdots, 4, \alpha_i, =1,2$ such that
		\begin{equation}\label{pq}
			q>(2(q_{1}\vee q_2\vee q_3\vee q_4))\vee(p+q_{1}-1) \quad and\quad p\ge (2(q_{1}\vee q_2\vee q_3\vee q_4)-q_1+1)
		\end{equation}
		while
		\begin{equation}\label{khas}x^{T}f(x,y,i,t)+ \dfrac{q-1}{2}|g(x,y,i,t)|^{2}
			\le K(|x|^2+|y|^2)-\alpha_1|x|^{p}+\alpha_2|y|^{p}
		\end{equation}
		for all $\left( x,y,i,t\right) \in R^{n}\times R^n\times S\times R_{+}$.
	\end{Assumption}

	In this paper, the control function  satisfies the following assumption.
	\begin{Assumption}\label{u1}
		Assume that there exists a positive number $L$ such that
		\begin{equation}\label{uu}
			|u(x,i,t)-u(\bar{x},i,t)|\le L(|x-\bar{x}|)
		\end{equation}
		Moreover, for the stability purpose, assume that $u(0, i, t)\equiv 0$.
		for all $\left( x,i,t\right) \in R^{n}\times S\times R_{+}$
		which implies \begin{equation}\label{2.10}|u(x,i,t)|\le L|x|.\end{equation}	
	\end{Assumption}

	\section{$q$-th moment finiteness and boundedness}
	
	\begin{Theorem}\label{youjie1}
		Under Assumptions \ref{delay}, \ref{one}, \ref{Khas} and \ref{u1}, the controlled system (\ref{jsde}) with any initial value $x_{0}=\xi$ has a unique global solution $x(t)$ on $t\geqslant 0$ and for all $t>0$
\begin{equation}\label{l6}
\sup_{0\le s\le t}\mathbb{E}\left|x(s)\right|^q < \infty
\end{equation}
and
\begin{equation}\label{l7}
\mathbb{E}\int_0^t\left|x(s)\right|^{p+q-2}ds < \infty.
\end{equation}

Furthermore, if Assumption \ref{Khas} holds for
$$
\alpha_1 - \alpha_2 \frac{q - 2 + p h^*}{p + q - 2} > 0,
$$
then the solution has the property that
\begin{equation}\label{youjie112}
\sup_{0\le t< \infty} \mathbb{E}|x(t)|^q < \infty.
\end{equation}
	\end{Theorem}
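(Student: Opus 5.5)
The plan is to use the standard Khasminskii–Itô argument with a stopping time. Because the control uses the frozen state $x(v(t))$, the system is an SDDE with piecewise-frozen delayed arguments. We therefore build the solution step by step on the intervals $[k\delta,(k+1)\delta)$. On each such interval, $u(x(k\delta),r(k\delta),t)I(t)$ is an $\mathcal F_{k\delta}$-measurable Lipschitz perturbation, so Assumption \ref{one} gives a unique maximal local solution. For $b>\|\xi\|$ let $\sigma_b=\inf\{t\ge0:|x(t)|\ge b\}$. The goal is to show $\sigma_b\to\infty$ a.s. together with uniform $q$-th moment bounds.

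Apply Itô's formula to $V(x,t)=e^{\lambda t}|x|^q$, taking $\lambda=0$ for the finite-horizon claims. By (\ref{khas}),
$$q|x|^{q-2}\Big(x^Tf+\tfrac{q-1}{2}|g|^2\Big)\le qK|x|^{q-2}(|x|^2+|y|^2)-q\alpha_1|x|^{p+q-2}+q\alpha_2|x|^{q-2}|y|^p .$$
By (\ref{2.10}), the control contributes at most $qL|x(t)|^{q-1}|x(v(t))|$. We then apply Young's inequality to split the mixed terms:
$$|x|^{q-2}|y|^p\le \tfrac{q-2}{p+q-2}|x|^{p+q-2}+\tfrac{p}{p+q-2}|y|^{p+q-2},$$
$$|x|^{q-2}|y|^2\le C(|x|^q+|y|^q),\qquad |x|^{q-1}|x(v(t))|\le \tfrac{q-1}{q}|x|^q+\tfrac1q|x(v(t))|^q.$$
Next we integrate up to $t\wedge\sigma_b$ and take expectations. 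Lemma \ref{lemma} converts the delayed integrals $\int|x(s-h(s))|^{\cdot}ds$ into $h^*\int_{-\tau}^{t}|x(s)|^{\cdot}ds$ plus constants from $\xi$. The frozen term is controlled by $\int_0^t|x(v(s))|^q ds\le \delta\sum_{k}|x(k\delta)|^q$, which is in turn dominated by $\sup_{u\le s}|x(u)|^q$. Handling this term is the one genuinely delicate step: $|x(v(s))|^q$ is not a function of $|x(s)|$. I would either bound $\mathbb E\sup_{u\le s\wedge\sigma_b}|x(u)|^q$ using BDG, or, more simply, note that $\mathbb E|x(v(s)\wedge\sigma_b)|^q\le \sup_{u\le s}\mathbb E|x(u\wedge\sigma_b)|^q$ and run Gronwall on $\phi(t)=\sup_{s\le t}\mathbb E|x(s\wedge\sigma_b)|^q$. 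This yields
$$\phi(t)+\big(\alpha_1-\alpha_2\tfrac{q-2+ph^*}{p+q-2}\big)q\,\mathbb E\!\int_0^{t\wedge\sigma_b}|x|^{p+q-2}ds\le C_1+C_2\int_0^t\phi(s)ds .$$
For the finite-horizon part we do not need the sign condition. The term $|x|^{p+q-2}$, with coefficient $-q\alpha_1$, dominates the lower powers via $C|x|^q\le \epsilon|x|^{p+q-2}+C_\epsilon$. If the net coefficient is still bad, we instead use $V=|x|^{\bar q}$ with the observation that $q\alpha_2\tfrac{q-2+ph^*}{p+q-2}$ can be absorbed by $e^{\lambda t}$-free Gronwall only on finite intervals. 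I will therefore set up the Gronwall estimate without relying on the sign. Gronwall then gives $\phi(t)\le C(t)$ independent of $b$, so $b^q\,\mathbb P(\sigma_b\le t)\le C(t)$, which forces $\sigma_b\to\infty$. Fatou's lemma then yields (\ref{l6}) and (\ref{l7}).

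For (\ref{youjie112}) we use $V=e^{\lambda t}|x|^q$ with small $\lambda>0$. Under the stated condition, the net coefficient of $|x|^{p+q-2}$ is $-q(\alpha_1-\alpha_2\frac{q-2+ph^*}{p+q-2})<0$. When the delay is shifted, Lemma \ref{lemma} introduces a factor $e^{\lambda\tau}$, so we take $\lambda$ small enough that the coefficient stays negative. All the $|x|^2$-, $|x|^q$- and control-type terms, including $\lambda|x|^q$, are then bounded by $\frac{\epsilon}{1}|x|^{p+q-2}+C$ using $p+q-2>q$ (note $p>2$). The frozen control term is the main obstacle, since it must be absorbed without Gronwall. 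I would bound $e^{\lambda s}|x(v(s))|^q$ by summing over blocks and using $e^{\lambda s}\le e^{\lambda\delta}e^{\lambda v(s)}$, so that it is comparable to the integral of $e^{\lambda s}|x(s)|^{p+q-2}$ plus constants. Failing that, I would absorb it via the estimate on $\mathbb E|x(s)-x(v(s))|^q$ over one period. The result is $e^{\lambda t}\mathbb E|x(t)|^q\le C+Ce^{\lambda t}/\lambda$, hence $\sup_t\mathbb E|x(t)|^q<\infty$.
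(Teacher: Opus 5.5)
Your plan has a genuine gap in the finite-horizon part. Global existence, (\ref{l6}) and (\ref{l7}) are asserted with no relation between $\alpha_1$ and $\alpha_2$, and your argument does not cover that case. After Young's inequality and Lemma \ref{lemma}, the right-hand side contains $q\alpha_2\frac{ph^*}{p+q-2}\mathbb{E}\int_0^{t\wedge\sigma_b}|x(s)|^{p+q-2}ds$. This can exceed the good term $q(\alpha_1-\alpha_2\frac{q-2}{p+q-2})\mathbb{E}\int_0^{t\wedge\sigma_b}|x(s)|^{p+q-2}ds$. Since $p+q-2>q$, the surplus is not controlled by $\phi(t)=\sup_{s\le t}\mathbb{E}|x(s\wedge\sigma_b)|^q$, so no Gronwall argument can absorb it, on a finite interval or otherwise. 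Your fallbacks do not help. The bound $C|x|^q\le\epsilon|x|^{p+q-2}+C_\epsilon$ only handles lower powers. Switching to $|x|^{\bar q}$ with $\bar q<q$ replaces the ratio by $1+\frac{p(h^*-1)}{p+\bar q-2}$, which is no smaller because $h^*\ge 1$; changing the exponent does not produce the missing sign. The missing idea is to use the positive lower bound $h(t)\ge h'>0$ of the delay, i.e. the method of steps, which is what the paper does. For $t\le h'$ we have $t-h(t)\in[-\tau,0]$, so the delayed integrals are constants determined by $\xi$. Gronwall then gives $\sigma_\infty\ge h'$, the $q$-th moment bound, and $\mathbb{E}\int_0^{h'}|x|^{p+q-2}ds<\infty$. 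On $[0,2h']$, Lemma \ref{lemma} bounds the delayed integral by $h^*\mathbb{E}\int_{-\tau}^{h'}|x|^{p+q-2}ds$, which is already finite, and you iterate. An equivalent route is to apply It\^o's formula to $e^{-\lambda t}|x|^q$ with a weighted Young inequality and $\lambda$ large; since $h(s)\ge h'$, this puts a small factor $e^{-\lambda h'}$ in front of the delayed term.

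For (\ref{youjie112}) the frozen control term is left unresolved, and neither of your suggestions works as stated. The integral $\int_0^t e^{\lambda s}\mathbb{E}|x(v(s))|^q ds$ is a Riemann sum of point values $\mathbb{E}|x(k\delta)|^q$, and it cannot be compared with $\int e^{\lambda s}\mathbb{E}|x(s)|^{p+q-2}ds$ without a continuity estimate. Estimating $\mathbb{E}|x(s)-x(v(s))|^q$ would need moments of $|f|^q$, i.e. of order roughly $qq_1$, which you do not have. Your premise that the term must be absorbed without Gronwall is also mistaken. Split $qL|x|^{q-1}|x(v(s))|\le C_\eta|x|^q+\eta|x(v(s))|^q$ with small $\eta$, and absorb $C_\eta|x|^q$ into the leftover $\lambda|x|^{p+q-2}$ plus a constant. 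Then use $e^{\lambda s}\mathbb{E}|x(v(s))|^q\le e^{\lambda\delta}\sup_{u\le s}e^{\lambda u}\mathbb{E}|x(u)|^q$. Gronwall produces growth $e^{\eta e^{\lambda\delta}t}$, which is harmless once $\eta e^{\lambda\delta}<\lambda$; this step needs the finiteness already given by (\ref{l6}). The paper carries out a discrete version of this, Lemma \ref{l1}, with $\eta=L\delta$ and $\delta$ small.
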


To prove this Theorem, we need the follow Lemma:
\begin{Lemma}\label{l1}
Let $\lambda>0.$ If
\begin{equation}\label{l10}
e^{\lambda t} \mathbb{E}\left|x(t)\right|^q\le C_1+C_2\frac{e^{\lambda t}}{\lambda}+C_3\delta\int_{0}^{t}e^{\lambda s}\mathbb{E}|x(v(s)|^qds
\end{equation}
for any $t\ge0,$ then for sufficiently small $\delta>0,$ we have
$$\sup_{t\ge0}\mathbb{E}\left|x(t)\right|^q\le C_1e^{-\lambda t} + \frac{C_2}{\lambda} + (C_1 +\frac{C_2}{\lambda}\frac{e^{\lambda\delta}-1}{e^{\lambda\delta}-1-\alpha})\frac{C_3\delta e^{\lambda\delta} }{\lambda}<\infty.$$
\end{Lemma}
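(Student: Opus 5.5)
The idea is to turn the integral inequality (\ref{l10}) into a discrete Gronwall-type recursion on the grid $k\delta$, using that $v(s)$ is piecewise constant. Set $y(t)=e^{\lambda t}\mathbb{E}|x(t)|^q$ and $a_k=\mathbb{E}|x(k\delta)|^q$. On $[k\delta,(k+1)\delta)$ we have $\mathbb{E}|x(v(s))|^q=a_k$. Hence
$$\int_0^t e^{\lambda s}\mathbb{E}|x(v(s))|^q ds\le \sum_{k=0}^{[t/\delta]} a_k\frac{e^{\lambda (k+1)\delta}-e^{\lambda k\delta}}{\lambda}=\frac{e^{\lambda\delta}-1}{\lambda}\sum_{k=0}^{[t/\delta]}e^{\lambda k\delta}a_k .$$
Writing $b_k=e^{\lambda k\delta}a_k=y(k\delta)$, the hypothesis evaluated at $t=k\delta$ becomes
$$b_k\le C_1+\frac{C_2}{\lambda}e^{\lambda k\delta}+\alpha\sum_{j=0}^{k}b_j,\qquad \alpha:=\frac{C_3\delta(e^{\lambda\delta}-1)}{\lambda}.$$
Since $\alpha\to0$ as $\delta\to0$, this is where "sufficiently small $\delta$" enters. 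We take $\delta$ so small that $\alpha<1$, and indeed $\alpha<e^{\lambda\delta}-1$; the latter is equivalent to $C_3\delta<\lambda$. The bound is then meaningful only if the denominator $e^{\lambda\delta}-1-\alpha$ in the claim is positive, which is exactly this condition.

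Next, solve the discrete inequality by induction. Absorb the $j=k$ term to get $(1-\alpha)b_k\le C_1+\frac{C_2}{\lambda}e^{\lambda k\delta}+\alpha\sum_{j<k}b_j$. Alternatively, compare with the majorant sequence $S_k=\sum_{j\le k}b_j$ and use $S_k-S_{k-1}\le C_1+\frac{C_2}{\lambda}e^{\lambda k\delta}+\alpha S_k$. This gives a linear recursion, and I would solve it explicitly. The claim to aim for is that $\alpha\sum_{j\le k}b_j\le \big(C_1+\frac{C_2}{\lambda}\frac{e^{\lambda\delta}-1}{e^{\lambda\delta}-1-\alpha}\big)e^{\lambda k\delta}\cdot(\text{const})$. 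In other words, the geometric growth $e^{\lambda k\delta}$ of the forcing term dominates the growth factor $(1-\alpha)^{-1}$ produced by the feedback, provided $(1-\alpha)^{-1}<e^{\lambda\delta}$, i.e. roughly $\alpha<e^{\lambda\delta}-1$. I would prove the ansatz $b_k\le A+Be^{\lambda k\delta}$ by induction. The constant term $C_1$ would be handled by the cruder bound $C_1(1-\alpha)^{-k}$, controlled in the same way, or simply by $C_1\le C_1e^{\lambda k\delta}$. Summing the geometric series $\sum_{j\le k}e^{\lambda j\delta}\le e^{\lambda k\delta}\frac{e^{\lambda\delta}}{e^{\lambda\delta}-1}$ then produces the fraction $\frac{e^{\lambda\delta}-1}{e^{\lambda\delta}-1-\alpha}$ after closing the induction.

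Finally, return to general $t$. Insert the bound on $\sum_{k\le[t/\delta]}b_k$ into the original inequality (\ref{l10}) at time $t$:
$$y(t)\le C_1+\frac{C_2}{\lambda}e^{\lambda t}+C_3\delta\frac{e^{\lambda\delta}-1}{\lambda}\sum_{k\le [t/\delta]}b_k .$$
Bound the sum by $\big(C_1+\frac{C_2}{\lambda}\frac{e^{\lambda\delta}-1}{e^{\lambda\delta}-1-\alpha}\big)e^{\lambda t}\frac{e^{\lambda\delta}}{e^{\lambda\delta}-1}$, using $e^{\lambda[t/\delta]\delta}\le e^{\lambda t}$. Then divide by $e^{\lambda t}$. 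This yields $C_1e^{-\lambda t}+\frac{C_2}{\lambda}+\big(C_1+\frac{C_2}{\lambda}\frac{e^{\lambda\delta}-1}{e^{\lambda\delta}-1-\alpha}\big)\frac{C_3\delta e^{\lambda\delta}}{\lambda}$, which is independent of $t$ apart from the decaying first term. Taking the supremum over $t$ gives the finite bound.

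The main obstacle is the discrete induction step. The choice of constants must close exactly, and the constant $C_1$ term, which does not grow like $e^{\lambda k\delta}$, must be handled without losing the $t$-independence. I would first try the ansatz $S_k\le D e^{\lambda k\delta}$ with $D$ solving $D=\frac{e^{\lambda\delta}}{e^{\lambda\delta}-1}\big(C_1+\frac{C_2}{\lambda}+\alpha D\big)$-type fixed point equations, adjusting to match the stated constant.
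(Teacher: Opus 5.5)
Your overall architecture matches the paper's: the grid $k\delta$, $b_k=e^{\lambda k\delta}a_k$ with the same $\alpha=C_3\delta(e^{\lambda\delta}-1)/\lambda$, a linear recursion for partial sums, and the smallness condition $C_3\delta<\lambda$. Your passage to general $t$ is also the paper's last step, with $M=\sup_k a_k$.

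The gap is in the step you flag as the main obstacle, and it is more than unfinished: your set-up cannot produce the constant the lemma asserts. At $t=k\delta$ the integral over $[0,k\delta]$ covers only the cells $j=0,\dots,k-1$. So the hypothesis gives exactly
\[
b_k\le C_1+\frac{C_2}{\lambda}e^{\lambda k\delta}+\alpha\sum_{j<k}b_j ,
\]
with no $b_k$ on the right. Bounding by $\sum_{j\le[t/\delta]}$ adds a spurious $\alpha b_k$. Absorbing it changes the growth factor to $(1-\alpha)^{-1}$ and requires $C_3\delta e^{\lambda\delta}<\lambda$. Carried through, it gives
\[
\sup_k a_k\le \frac{C_1}{1-\alpha}+\frac{C_2}{\lambda}\frac{e^{\lambda\delta}-1}{e^{\lambda\delta}-1-\alpha e^{\lambda\delta}},
\]
which is larger in both terms than the stated $C_1+\frac{C_2}{\lambda}\frac{e^{\lambda\delta}-1}{e^{\lambda\delta}-1-\alpha}$.

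Your treatments of $C_1$ also lose accuracy:
\begin{itemize}
\item The ansatz $b_k\le A+Be^{\lambda k\delta}$ with constant $A=C_1$ does not close. Feeding it back produces $\alpha kA$, which is unbounded in $k$ and can only be absorbed by enlarging $B$.
\item Replacing $C_1$ by $C_1e^{\lambda k\delta}$, as in your fixed-point ansatz, lumps $C_1$ with $C_2/\lambda$. It yields at best $(C_1+\frac{C_2}{\lambda})\frac{e^{\lambda\delta}-1}{e^{\lambda\delta}-1-\alpha}$.
\end{itemize}

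The missing idea is to solve the exact recursion explicitly, letting each forcing term keep its own growth rate. Set $S_k=\sum_{j<k}b_j$ with $S_0=0$. The display gives $S_{k+1}\le(1+\alpha)S_k+C_1+\frac{C_2}{\lambda}e^{k\lambda\delta}$, hence
\[
S_k\le\sum_{i<k}\bigl(C_1+\tfrac{C_2}{\lambda}e^{i\lambda\delta}\bigr)(1+\alpha)^{k-1-i}.
\]
Insert this into $b_k\le C_1+\frac{C_2}{\lambda}e^{k\lambda\delta}+\alpha S_k$. The $C_1$-part telescopes to exactly $C_1(1+\alpha)^k$. Summing the remaining geometric series gives, with $\rho=(1+\alpha)e^{-\lambda\delta}$,
\[
a_k\le C_1\rho^k+\frac{C_2}{\lambda}+\frac{C_2}{\lambda}\,\frac{\alpha\,(1-\rho^k)}{e^{\lambda\delta}-1-\alpha}.
\]
The condition $\rho<1$, i.e. $C_3\delta<\lambda$, is exactly what keeps the $C_1$-contribution at $C_1$. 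It yields $a_k\le C_1+\frac{C_2}{\lambda}\frac{e^{\lambda\delta}-1}{e^{\lambda\delta}-1-\alpha}$, and your final step then finishes the proof.

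Each of your variants does give some finite explicit bound. Finiteness of $\sup_{t\ge0}\mathbb{E}|x(t)|^q$ is all that the proof of Theorem \ref{youjie1} (and hence Theorem \ref{dl1}) uses. However, your variants do not establish the inequality as stated.
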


\textbf{Proof.} For $t=k\delta,$ if we denote $a_k=\mathbb{E}\left|x(k\delta)\right|^q$, $b_n = e^{\lambda t_n} a(t_n)$ and $S_n = \sum_{k=0}^{n-1} b_k$, then we have
$$b_k \le C_1 + \frac{C_2}{\lambda} e^{k\lambda \delta} + \alpha S_k$$
where $\alpha = \frac{C_3 \delta (e^{\lambda \delta}-1)}{\lambda}$ and thus
$$S_{k+1}\le S_k+b_{k}\le S_k+C_1 + \frac{C_2}{\lambda} e^{k\lambda \delta} + \alpha S_k\le (1+\alpha) S_k + C_1 + \frac{C_2}{\lambda} e^{k\lambda \delta}$$
with initial value $S_0=0$.

Consequently,
$$S_k\le\sum_{i=0}^{k-1}(C_1 +\frac{C_2}{\lambda} e^{i\lambda \delta})(1+\alpha)^{n-1-i}.$$

So
$$b_k \le C_1 + \frac{C_2}{\lambda} e^{k\lambda \delta} + \alpha S_k\le C_1 + \frac{C_2}{\lambda} e^{k\lambda \delta} + \alpha \sum_{i=0}^{k-1}(C_1 +\frac{C_2}{\lambda} e^{i\lambda \delta})(1+\alpha)^{n-1-i}$$
and then
\begin{equation}
\aligned a_k &\le C_1e^{-k\lambda \delta} + \frac{C_2}{\lambda} + \alpha e^{-k\lambda \delta}\sum_{i=0}^{k-1}(C_1 +\frac{C_2}{\lambda} e^{i\lambda \delta})(1+\alpha)^{n-1-i}\\&
\le C_1e^{-k\lambda \delta} + \frac{C_2}{\lambda} + C_1\alpha e^{-k\lambda \delta}\sum_{i=0}^{k-1}(1+\alpha)^{n-1-i}+ \alpha \frac{C_2}{\lambda}e^{-k\lambda \delta}\sum_{i=0}^{k-1} e^{i\lambda \delta}(1+\alpha)^{n-1-i}\\&
=C_1 e^{-k\lambda \delta}(1+\alpha)^k+\frac{C_2}{\lambda} +\alpha \frac{C_2}{\lambda}e^{-k\lambda \delta}\frac{(1+\alpha)^k-e^{k\lambda\delta}}{1+\alpha-e^{\lambda\delta}}.  \endaligned
\end{equation}

Note that $\alpha$ is the high order infinitesimal of $\delta$. So we can choose $\delta$ sufficiently small such that $e^{-\lambda \delta}(1+\alpha)<1$, it follows that
$$\aligned a_k&\le C_1 [e^{-\lambda \delta}(1+\alpha)]^k+\frac{C_2}{\lambda} +\alpha \frac{C_2}{\lambda}\frac{[e^{-\lambda \delta}(1+\alpha)]^k-1}{1+\alpha-e^{\lambda\delta}}\\&
\le C_1 +\frac{C_2}{\lambda} +\alpha \frac{C_2}{\lambda}\frac{1}{e^{\lambda\delta}-1-\alpha}\\&
=C_1 +\frac{C_2}{\lambda}\frac{e^{\lambda\delta}-1}{e^{\lambda\delta}-1-\alpha}<\infty.\endaligned$$

For general $t>0$, suppose $t\in(k\delta,(k+1)\delta)$. By (\ref{l10}) we have
$$\mathbb{E}\left|x(t)\right|^p\le C_1e^{-\lambda t} + \frac{C_2}{\lambda} + \alpha e^{-\lambda t}\sum_{i=0}^{k}e^{i\lambda \delta}a_i.$$

Denote $M:=\sup_{k\ge0}a_k.$ We have proved that $M<\infty.$ Now
$$\aligned \mathbb{E}\left|x(t)\right|^p&\le C_1e^{-\lambda t} + \frac{C_2}{\lambda} + M\alpha e^{-\lambda t}\sum_{i=0}^{k}e^{i\lambda \delta}\\&
=C_1e^{-\lambda t} + \frac{C_2}{\lambda} + M\alpha e^{-\lambda t}\frac{e^{(k+1)\lambda\delta}-1}{e^{\lambda\delta}-1}\\&
\le C_1e^{-\lambda t} + \frac{C_2}{\lambda} + M\alpha \frac{e^{\lambda\delta}}{e^{\lambda\delta}-1}
<\infty.\endaligned$$

We complete the proof. \hfill$\Box$

\textbf{Proof of Theorem \ref{youjie1}.} The controlled system (\ref{jsde}) is in fact a hybrid stochastic differential equation with a bounded variable delay. Fix $\xi \in C\left([-\tau, 0]; \mathbb{R}^n\right)$ and $r_0 \in S$. Since both $f$ and $g$ are locally Lipschitz continuous with respect to the first two spacial variables $x$ and $y$, and $u$ is global Lipschitz continuous with respect to $x$, then exists a unique solution for any $t \in [0, \sigma_e)$, where $\sigma_e$ is the lifetime (or explosion time) of the solution. We first prove there exist a unique solution on $[0,\theta)$.

Define
\[
\sigma_k= \inf\left\{ t \in [0, \sigma_e) : |x(t)| \ge k \right\}.
\]
 It is obvious that $\sigma_k$ is non-decreasing as $k$ increases. Denote $\sigma_\infty = \lim_{k \to \infty} \sigma_k$. Then $\sigma_\infty \le \sigma_e$. Therefore, we only need to prove $\sigma_\infty = \infty$.

For $t\in[0,\theta)$, It\^o's formula yields
\begin{equation}\label{eq:expect_x_p}
\begin{aligned}
\mathbb{E}\left|x(t \wedge \sigma_{k})\right|^q - \left|\xi(0)\right|^q&\le q\mathbb{E}\int_0^{t \wedge \sigma_{k}} \left|x(s)\right|^{q-2}\left[x^T(s)f(x(s),x_h(s),r(s),s)\right. \\
&\qquad+\left.\frac{q-1}{2}\left|g(x(s),x_h(s),r(s),s)\right|^2\right]ds \\
&\quad+q\mathbb{E}\int_0^{t \wedge \sigma_{k}} \left|x(s)\right|^{q-1}|u(x(v(s)),r(v(s)),s)|ds.
\end{aligned}
\end{equation}

Then by Assumptions \ref{one}, \ref{Khas} and \ref{u1}, we have
\begin{equation}\label{eq:expect_x_p1}
\begin{aligned}
\mathbb{E}\left|x(t \wedge \sigma_{k})\right|^q - \left|\xi(0)\right|^q
&\le Kq\mathbb{E}\int_0^{t \wedge \sigma_{k}}\left|x(s)\right|^{q}ds
+Kq\mathbb{E}\int_0^{t \wedge \sigma_{k}}\left|x(s)\right|^{q-2}\left|x_h(s)\right|^{2}ds\\&
\quad-\alpha_1q\mathbb{E}\int_0^{t \wedge \sigma_{k}}\left|x(s)\right|^{p+q-2}ds+\alpha_2q\mathbb{E}\int_0^{t \wedge \sigma_{k}}\left|x(s)\right|^{p-2}\left|x_h(s)\right|^{q}ds\\&
\quad+qL\mathbb{E}\int_0^{t \wedge \sigma_{k}}\left|x(s)\right|^{q-1}|x(v(s))|ds.
\end{aligned}
\end{equation}

By Young's inequality and the same idea of \cite{DM}, Theorem 2.4,  we can get
\begin{equation*}
\begin{aligned}
\mathbb{E}\int_0^{t \wedge \sigma_{k}} |x(s)|^{q-2} |x_h(s)|^2 ds
& \le \frac{q - 2}{q}\mathbb{E}\int_0^{t \wedge \sigma_{k}} |x(s)|^q ds + \frac{2}{q}\mathbb{E}\int_{0}^{t \wedge \sigma_k} |x_h(s)|^q ds
\end{aligned}
\end{equation*}

\begin{equation*}
\begin{aligned}
\mathbb{E}\int_0^{t \wedge \sigma_{k}} |x(s)|^{q-1} |x(v(s))|ds
& \le \frac{q - 1}{q}\mathbb{E}\int_0^{t \wedge \sigma_{k}} |x(s)|^q ds + \frac{1}{q}\mathbb{E}\int_{0}^{t \wedge \sigma_k} |x(v(s))|^q ds
\end{aligned}
\end{equation*}
and
\begin{equation*}
\begin{aligned}
\mathbb{E}\int_0^{t \wedge \sigma_{k}} |x(s)|^{q - 2}|x_h(s)|^p ds
&\le \frac{\alpha_1}{2\alpha_2}\mathbb{E}\int_0^{t \wedge \sigma_{k}} |x(s)|^{p + q - 2} ds+\frac{\alpha_3}{\alpha_2}\mathbb{E}\int_{-\tau}^{t \wedge \sigma_{k}} |x_h(s)|^{p + q - 2} ds
\end{aligned}
\end{equation*}
where
$$\alpha_3:=\frac{q}{p+q-2}\alpha_2^\frac{p+q-2}{q}\left(\frac{2(p-2)}{\alpha_1(p+q-2)}\right)^\frac{p-2}{q}.$$

Substituting the above formulas into (\ref{eq:expect_x_p1}), we have
\begin{equation}\label{l8}
\begin{aligned}
&\quad\mathbb{E}|x(t \wedge \sigma_{k})|^q+\frac{\alpha_1}{2}\mathbb{E}\int_0^{t \wedge \sigma_{k}} |x(s)|^{p+q-2} ds\\
&\le C_{1,t} + (2K+L)(q-1)\mathbb{E}\int_0^{t \wedge \sigma_{k}} |x(s)|^q ds + L\mathbb{E}\int_0^{t \wedge \sigma_{k}} |x(v(s))|^q ds.
\end{aligned}
\end{equation}
where
\begin{equation}\label{l9}
C_{1,t}=\|\xi\|^q+2L\int_0^{t} |x(s-h(s))|^q ds+\alpha_3\mathbb{E}\int_0^{t} |x(s-h(s))|^{p+q-2} ds.
\end{equation}

Firstly, for any $t\in[0,h'],$ it is easy to see that $-\tau\le t-h(t)\le0$ and $C_{1,t}\le C_{1,h'}<\infty$.

Taking supremum on the both sides of (\ref{l8}), we have
\begin{equation}\label{10}
\begin{aligned}
\sup_{0\le s\le t}\mathbb{E}\left| x(s\wedge \sigma_{k}) \right|^q+\frac{\alpha_1}{2}\mathbb{E}\int_0^{t \wedge \sigma_{k}} |x(s)|^{p+q-2} ds
&\le C_{1,t}+C_2\int_0^{t}\sup_{0\le r\le s}\mathbb{E}\left| x(s\wedge \sigma_{k}) \right|^q ds
\end{aligned}
\end{equation}
where $C_2=(2K+L)(q-1)+L.$

An application of the Gronwall inequality yields
\begin{equation}
\label{eq:sup_x_p_bound}
\sup_{0 \le r \le t} \mathbb{E}|x(r \wedge \sigma_{k})|^q \le C_{1,h'}e^{C_2h'}, \ t\in[0,h'].
\end{equation}
Consequently
$${k^q}P({\sigma _{k}} \le h') \le \mathbb{E}|x(h'\wedge {\sigma _{k}}){|^q} \le \mathop {\sup }\limits_{0 \le r \le h'} \mathbb{E}|x(r \wedge {\sigma _{k}}){|^q} < \infty.$$

Letting $\delta \to 0$ yields $P\left( {{\sigma _\infty } \le h'} \right) = 0$.
So $ {\sigma_\infty}\ge h'$ almost surely.
Then let $k \to \infty $ in (\ref{eq:sup_x_p_bound}), it follows that
\begin{equation*}
\begin{aligned}
\sup_{0 \le r \le h'} \mathbb{E}|x(r)|^q < \infty.
\end{aligned}
\end{equation*}
Then by letting $k\to \infty$ in (\ref{10}), we have
$$\mathbb{E}\int_0^{h'} |x(s)|^{p+q-2} ds<\infty.$$

Now for $t\in[0,2h'],$ $-\tau\le t-h(t)\le h'$. Then by Lemma \ref{lemma} and the above proof, we have
$$\mathbb{E}\int_0^{2h'} |x(s)|^{p+q-2} ds\le h^*\mathbb{E}\int_{-\tau}^{h'} |x(s)|^{p+q-2} ds\le h^*\tau||\xi||^{p+q-2}+\mathbb{E}\int_0^{h'} |x(s)|^{p+q-2} ds<\infty.$$

Note that (\ref{10}) still holds for $t\in[0,2h']$. Then we can prove $ {\sigma_\infty}\ge 2h'$ almost surely. Then
\begin{equation*}
\begin{aligned}
\sup_{0 \le r \le 2h'} \mathbb{E}|x(r)|^q < \infty.
\end{aligned}
\end{equation*}
and
$$\mathbb{E}\int_0^{2h'} |x(s)|^{p+q-2} ds<\infty.$$
Repeating this procedure, we can prove that $ \sigma_e\ge{\sigma_\infty}>\theta$ almost surely. That is, there exists a unique solution on $[0,\theta]$.

For $t\in[\theta, T],$ there is no control here, the equation becomes to (\ref{sde}) with the initial value $x_\theta(s)=x(\theta-s), s\in[-\tau,0]$. Note that
(\ref{sde}) has unique local solution on $[\theta,\sigma_e)$. Then by the same method used above, it follows that $ \sigma_e\ge{\sigma_\infty}>T.$ So there exists a unique solution on $[0,T]$.

Repeating this procedure, it follows that there exists a unique solution on $[T,2T]$, $[2T,3T]$, $\cdots$, etc. That is, the controlled system (\ref{jsde}) with any initial value $x_{0}=\xi$ has a unique global solution $x(t)$ on $t\geqslant 0$.

Now let us prove (\ref{youjie112}).

For any $\lambda>0$, It\^o's formula yields
$$\aligned e^{\lambda t}|x(t)|^{q}&\le ||\xi||^{q}+\int_0^te^{\lambda s}|x(s)|^{q-2}[(\lambda+qK)|x(s)|^2+qK|x_h(s)|^2-q\alpha_1|x(s)|^p\\&
\quad+q\alpha_2|x_h(s)|^p]ds+q\int_0^te^{\lambda s}|x(s)|^{q-1}|u(x(v(s)))|ds\\&
 \le ||\xi||^{q}+(\lambda+2K(q-1)+L(q-1)\delta^{-\frac{1}{q-1}})\mathbb{E}\int_{0}^{t}e^{\lambda s}\left|x(s)\right|^qds\\&
  \quad+2K\mathbb{E}\int_{0}^{t}e^{\lambda s}\left|x_h(s)\right|^qds-\bar{\alpha}_1\mathbb{E}\int_{0}^{t}e^{\lambda s}\left|x(s)\right|^{p+q-2}ds\\&
  \quad+\bar{\alpha}_2\mathbb{E}\int_{0}^{t}e^{\lambda s}\left|x_h(s)\right|^{p+q-2}ds+L\delta\mathbb{E}\int_{0}^{t}e^{\lambda s}|x(v(s)|^qds\endaligned$$
 where
  $$\bar{\alpha}_1 =q\alpha_1-\frac{\alpha_2q(q-2)}{p+q-2},\quad
\bar{\alpha}_2 =\frac{\alpha_2pq}{p+q-2}.$$

It follows that
$$\aligned e^{\lambda t} \mathbb{E}\left|x(t)\right|^q&\le ||\xi||^q+\mathbb{E}\int_{0}^{t}e^{\lambda s}(C(q,\delta)-(\bar{\alpha}_1-\lambda)\left|x(s)\right|^{p+q-2})ds\\&
\quad+(\bar{\alpha}_2+\lambda)\mathbb{E}\int_{0}^{t}e^{\lambda s}\left|x_h(s)\right|^{p+q-2}ds+L\delta\mathbb{E}\int_{0}^{t}e^{\lambda s}|x(v(s)|^qds\endaligned$$
where $$C(q,\delta):=2\sup_{u\ge 0}[(\lambda+2K(q-1)+L(q-1)\delta^{-\frac{1}{q-1}})u^q-\lambda u^{p+q-2}]<\infty$$
for any fixed $\lambda>0$ and $\delta>0.$

Since $\bar{\alpha}_1>\bar{\alpha}_2h^*$, we can choose $\lambda$ sufficiently small such that
$$\bar{\alpha}_1-\lambda=h^*e^{\lambda\tau}(\bar{\alpha}_2+\lambda).$$

Then we have
\begin{equation}\label{l10'}
e^{\lambda t} \mathbb{E}\left|x(t)\right|^q\le ||\xi||^q+(\bar{\alpha}_2+\lambda)h^*\tau e^{\lambda\tau}||\xi||^{p+q-2}+C(q,\delta)\frac{e^{\lambda t}}{\lambda}+L\delta\int_{0}^{t}e^{\lambda s}\mathbb{E}|x(v(s)|^qds.
\end{equation}

Then Lemma \ref{l1} implies that
$$\sup_{t\ge0}\mathbb{E}\left|x(t)\right|^q<\infty.$$

We complete the proof. \hfill$\Box$

\section{Main results}
	
We have just shown that the $q$-th moment of the controlled system $\left(\ref{jsde} \right)$ is bounded as long as the coefficients $f$ and $g$ satisfy Assumption \ref{Khas} and the control function $u$ satisfies Assumption \ref{u1}. However, such a control may not be able to stabilize the given SDDE (\ref{sde}). We need more carefully design the control function in order to stabilize the controlled system $\left(\ref{jsde}\right)$. In this section, we will design the control function to meet a number of conditions under our standing Assumptions \ref{one}, \ref{delay}, \ref{Khas}, and \ref{u1}, and then show such designed control function will indeed guarantee the exponential stability of the controlled system $\left(\ref{jsde}\right)$. Let us begin to state our conditions.
	\begin{Condition}\label{c1}
		Design the control function $u:\mathbb{R}^n\times S \times \mathbb{R}_{+}\to\mathbb{R}^n$ such that there are constants $\alpha_{ji} > 0$ , $\beta_{ji}> 0$ and
		$l_{ji} > 0, k_{ji}\in R, i\in S, j=1,2$ for both
		\begin{equation}\label{c11}
 \aligned &\quad x^{T}[f(x,y,i,t)+u(x,i,t)]+\dfrac{1}{2}|g(x,y,i,t)|^{2}\\&
			\le k_{1i}|x|^2+l_{1i}|y|^{2}-\beta_{1i}|x|^p+\gamma_{1i}|y|^{p} \endaligned
		\end{equation}
and
\begin{equation}\label{c12}
\aligned &\quad x^{T}[f(x,y,i,t)+u(x,i,t)]+\dfrac{q_1}{2}|g(x,y,i,t)|^{2}\\&
			\le k_{2i}|x|^2+l_{2i}|y|^{2}-\beta_{2i}|x|^p+\gamma_{2i}|y|^{p}\endaligned
		\end{equation}
 for any $t\in[nT,nT+\theta)$. 	Moreover,
		\begin{equation}\label{c13}
			\begin{aligned}
				&\mathcal{A}_{1}:=-2\textrm{diag}\left(k_{11},\cdots,k_{1N} \right) -\Gamma \\
				&\mathcal{A}_{2}:=-(q_{1}+1)\textrm{diag}\left(k_{21},\cdots,k_{2N} \right) -\Gamma\\
			\end{aligned}
		\end{equation}
		are nonsingular M-matrices.
	\end{Condition}

Denote
$$\aligned &(\theta_1,\cdots,\theta_N)^T=\mathcal{A}_{1}^{-1}(1,\cdots,1)^T\\&
(\bar{\theta}_1,\cdots,\bar{\theta}_N)^T=\mathcal{A}_{2}^{-1}(1,\cdots,1)^T\\&
\zeta_1=2\max_{i\in S}\theta_i l_{1i},\quad \zeta_2=2\min_{i\in S}\theta_i \beta_{1i}\quad
\zeta_3=2\max_{i\in S}\theta_i \gamma_{1i},\\&\zeta_4=(q_1+1)\max_{i\in S}\bar{\theta}_i l_{2i}\quad
\zeta_5=(q_1+1)\min_{i\in S}\bar{\theta}_i \beta_{2i},\quad \zeta_6=(q_1+1)\max_{i\in S}\bar{\theta}_i \gamma_{2i}.\endaligned$$

We can choose $\zeta_i, i=1,\cdots,6$ such that
$$1>h^*\zeta_1,\quad \zeta_2>h^*\zeta_3$$
and
$$1>\frac{\zeta_4(q_1-1+2h^*)}{q_1+1},\quad \zeta_5>\zeta_6\frac{q+1-1+ph^*}{p+q_1-1}.$$

It is obvious that $\theta_i, \bar{\theta}_i, i=1,\cdots,N$ are positive.

Define
\begin{equation}\label{test}
U(x,i)=\theta_i|x|^2+\bar{\theta}_i|x|^{q_1+1}, (x,i)\in\mathbb{R}^n\times S,\end{equation}
an operator $L$ for $t\in[nT,nT+\theta)$ such at
\begin{equation}\label{LU}\aligned LU(x,y,i,t)&= 2\theta_i(x^T[f(x,y,i,t)+u(x,i,t)]+\dfrac{1}{2}|g(x,y,i,t)|^{2})\\&
\quad+(q_1+1)\bar{\theta}_i |x|^{q_1-1}(x^{T}[f(x,y,i,t)+u(x,i,t)]+\dfrac{q_1}{2}|g(x,y,i,t)|^{2})\\&
\quad+\sum_{j=1}^N\gamma_{ij}(\theta_j|x|^2+\bar{\theta}_j|x|^{q_1+1})\endaligned\end{equation}
and $L'$ for $t\in[nT+\theta,(n+1)T)$ such that
\begin{equation}\label{L'U}\aligned L'U(x,y,i,t)&= 2\theta_i(x^Tf(x,y,i,t)+\dfrac{1}{2}|g(x,y,i,t)|^{2})\\&
\quad+(q_1+1)\bar{\theta}_i |x|^{q_1-1}(x^{T}f(x,y,i,t)+\dfrac{q_1}{2}|g(x,y,i,t)|^{2})\\&
\quad+\sum_{j=1}^N\gamma_{ij}(\theta_j|x|^2+\bar{\theta}_j|x|^{q_1+1}).\endaligned\end{equation}

Then it follows that
\begin{equation}\label{suanzi}
\aligned LU(x,y,i,t)&\le 2\theta_i(k_{1i}|x|^2+l_{1i}|y|^{2}-\beta_{1i}|x|^p+\gamma_{1i}|y|^{p})\\&
\quad+(q_1+1)\bar{\theta}_i|x|^{q_1-1}(k_{2i}|x|^2+l_{2i}|y|^{2}-\beta_{2i}|x|^p+\gamma_{2i}|y|^{p})\\&
\quad+\sum_{j=1}^N\gamma_{ij}(\theta_j|x|^2+\bar{\theta}_j|x|^{q_1+1})\\&
\le -|x|^2+\zeta_1|y|^2-\zeta_2|x|^p+\zeta_3|y|^p\\&
\quad-|x|^{q_1+1}+\zeta_4|x|^{q_1-1}|y|^2-\zeta_5|x|^{q_1+p+1}+\zeta_6|x|^{q_1-1}|y|^p\\&
\le -|x|^2+\zeta_1|y|^2-\zeta_2|x|^p+\zeta_3|y|^p+\frac{2\zeta_4}{q_1+1}|y|^{q_1+1}-(1-\frac{\zeta_4(q_1-1)}{q_1+1})|x|^{q_1+1}\\&
\quad+\frac{p\zeta_6}{p+q_1-1}|y|^{p+q_1-1}-(\zeta_5-\frac{\zeta_6(q_1-1)}{p+q_1-1})|x|^{p+q_1-1}.\endaligned\end{equation}

If $u$ in the controlled system (\ref{jsde}) is instant (i.e. $u=u(x(t),r(t),t)$) and there is no intermittent (i.e. $I(t)\equiv 1$), then as interpreted in \cite{DM}, Condition \ref{c1} implies that the instant and nointermittent controlled system is exponentially stable. However, since we consider intermittent feedback controls based on discrete-time observations, another condition is needed.

\begin{Condition}\label{c2}
Find positive constants $ \gamma_{i}, i=1,\cdots,8$ and $\gamma'_j, j=4,5,6$ with $1\wedge\gamma_4>2(\gamma_5\vee\gamma_6\vee\gamma'_5\vee\gamma'_6) h^*=:\bar{\gamma}h^*$ and $W\in C(\mathbb{R}^n,\mathbb{R}_+)$ such that
\begin{equation}\label{c21}
\left\{\aligned LU(x,y,i,t)&+\gamma_{1}(2\theta_{i}|x|+(q_{1}+1)\bar{\theta_{i}}|x|^{q_{1}})^{2}+\gamma_{2}|f(x,y,i,t)|^{2}+\gamma_{3}|g(x,y,i,t)|^{2}\\&
\leq -\gamma_{4}|x|^{2}+\gamma_5|y|^2-W(x)+\gamma_6 W(y), \qquad t\in[nT,nT+\theta),\\
L'U(x,y,i,t)&\le\gamma'_{4}|x|^{2}+\gamma'_5|y|^2-W(x)+\gamma'_6 W(y), \qquad t\in[nT+\theta,(n+1)T)\endaligned\right.
		\end{equation}
and
\begin{equation}\label{c21'}
\gamma_7|x|^{q_1+p-1}\le W(x)\le \gamma_8(|x|^2+|x|^{q_1+p-1})
		\end{equation}
		for all $\left( x,y,i,t\right) \in R^{n}\times R^{n}\times S\times R_{+}$.
	\end{Condition}

It is easy to verify that Condition \ref{c2} can always holds for suitable constants $\gamma_i$ and $\gamma'_j$ since $u$ satisfies linear growth condition (\ref{2.10}).

\begin{Condition}\label{c3}
		Let the observation time interval $\delta$ satisfy
		\begin{equation}\label{c31}
			0<\delta<\frac{\sqrt{\gamma_{1}\gamma_{2}}}{2L}\bigwedge \frac{\gamma_{1}\gamma_{3}}{2L^{2}}\bigwedge \frac{\min\limits_{i\in S}{\gamma_{ii}}+\sqrt{(\min\limits_{i\in S}{\gamma_{ii}})^2+16L^4(L^2\wedge(\gamma_1(\gamma_4-\bar{\gamma}h^*)))}}{16L^4}
		\end{equation}
	\end{Condition}

Now we are in the position to state our main result.

	\begin{Theorem}\label{dl1}
		Under the same conditions of Theorem \ref{youjie1}, we can design the control function $u$ to satisfy Condition 4.1 and then choose positive constants $ \gamma_{i}, i=1,\cdots,8$ and $\gamma'_j, j=4,5,6$ to meet condition 4.2. If we further make sure Condition 4.3 to hold, then we can choose $\theta\in((1-\frac{\varepsilon}{C_5})T,T]$ (where $\varepsilon$ and $C_5$ will be determined in the proof)such that the solution of the controlled system $ \left(\ref{jsde} \right)$ has the property that for any $\bar{q}\in \left[ 2,q\right)$ ($q$ satisfies (\ref{pq}) in Assumption \ref{Khas}) and any initial value $ x_0 = \xi \in C([-\tau,0];R^{n})$,
		\begin{equation}\label{dl11}
			\limsup_{t\to\infty}\frac{1}{t}\log(\mathbb{E}|x(t)|^{\bar{q}})<0.
		\end{equation}
		That is, the controlled system $ \left(\ref{jsde} \right)  $ is exponentially stable in $ L^{\bar q}.$
	\end{Theorem}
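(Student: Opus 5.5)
We work with the Lyapunov functional built from $U(x,i)=\theta_i|x|^2+\bar\theta_i|x|^{q_1+1}$ in (\ref{test}), in the style of Mao's discrete-observation method. Since $v(t)$ and $r(v(t))$ are lagged, the controlled generator is not exactly $LU$ on $[nT,nT+\theta)$. We write the controlled drift as
$$f(x,y,r(t),t)+u(x(t),r(t),t)+\bigl[u(x(v(t)),r(v(t)),t)-u(x(t),r(t),t)\bigr]$$
and treat the bracket as a perturbation. To absorb it, we add to $U$ the standard correction
$$\int_{t-\delta}^t\int_s^t\Bigl[\delta|f(x(v),x_h(v),r(v),v)+u(\cdot)I(v)|^2+|g(x(v),x_h(v),r(v),v)|^2\Bigr]dv\,ds,$$
together with the delay terms $\bar\gamma\int_{t-h(t)}^t(|x(s)|^2+W(x(s)))ds$ weighted so that Lemma \ref{lemma} converts $\gamma_5|y|^2+\gamma_6W(y)$ into $h^*\bar\gamma(|x|^2+W(x))$. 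This uses $1\wedge\gamma_4>\bar\gamma h^*$. Call the total $V(x_t,t)$, and fix a rate $\lambda>0$ to be chosen later.

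\textbf{Step 1 (controlled intervals).} Apply the generalized It\^o formula to $e^{\lambda t}V$ on $[nT,nT+\theta)$. The cross term $(2\theta_i|x|+(q_1+1)\bar\theta_i|x|^{q_1})\,|u(x(v(t)),r(v(t)))-u(x(t),r(t))|$ is split by Young's inequality: one part is absorbed by $\gamma_1(2\theta_i|x|+(q_1+1)\bar\theta_i|x|^{q_1})^2$, the other is $\frac1{4\gamma_1}|u(x(v(t)),r(v(t)))-u(x(t),r(t))|^2$. We bound the latter by $2L^2|x(t)-x(v(t))|^2$ plus a switching term. The switching term is nonzero only when $r$ jumps on $[v(t),t]$, an event of probability at most $1-e^{\max|\gamma_{ii}|\delta}$-type; by independence of $r$ and $B$ it contributes $C\delta\,\mathbb{E}|x(v(t))|^2$. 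The increment $\mathbb{E}|x(t)-x(v(t))|^2$ is bounded by $2\delta\int_{v(t)}^t|f+uI|^2+2\int_{v(t)}^t|g|^2$, and the terms $\gamma_2|f|^2+\gamma_3|g|^2$ in Condition \ref{c2} together with the double integral in $V$ compensate it. This is where the three pieces of (\ref{c31}) in Condition \ref{c3} enter, including the quadratic-root bound involving $\min_i\gamma_{ii}$ and $\gamma_1(\gamma_4-\bar\gamma h^*)$. The net result on controlled intervals is
$$\frac{d}{dt}\mathbb{E}\bigl[e^{\lambda t}V\bigr]\le -\varepsilon\, e^{\lambda t}\,\mathbb{E}\bigl(|x|^2+|x|^{q_1+1}\bigr)$$
for some $\varepsilon>0$ and small $\lambda$, using $W(x)\ge\gamma_7|x|^{q_1+p-1}$ to dominate $|x|^{q_1+1}$ by $|x|^2+W$.

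\textbf{Step 2 (uncontrolled intervals).} On $[nT+\theta,(n+1)T)$ the second line of (\ref{c21}) gives $\mathbb{E}\,dV\le C_5\,\mathbb{E}(|x|^2+|x|^{q_1+1})\,dt$. Since $\theta_i,\bar\theta_i>0$, $V$ is comparable to $|x|^2+|x|^{q_1+1}$ up to the functional terms, so $\mathbb{E}V$ grows at most like $e^{c C_5(T-\theta)}$ there while it decays at rate about $c'\varepsilon$ during the controlled part. Over one period $T$ the exponent is therefore
$$-\varepsilon\theta+C_5(T-\theta),$$
after adjusting constants and handling the functional parts of $V$ via Lemma \ref{lemma}. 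This exponent is negative precisely when $\theta>(1-\varepsilon/C_5)T$, which gives the threshold in Theorem \ref{dl1}. Iterating over periods yields $\mathbb{E}|x(t)|^2\le Ce^{-\lambda_0 t}$ for some $\lambda_0>0$.

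\textbf{Step 3 (general $\bar q$) and main obstacle.} To pass from the second moment to $\bar q\in[2,q)$, we interpolate with Hölder:
$$\mathbb{E}|x|^{\bar q}\le \bigl(\mathbb{E}|x|^2\bigr)^{\frac{q-\bar q}{q-2}}\bigl(\mathbb{E}|x|^q\bigr)^{\frac{\bar q-2}{q-2}},$$
where the second factor is bounded uniformly in $t$ by (\ref{youjie112}) of Theorem \ref{youjie1}; this gives (\ref{dl11}). The main obstacle is Step 1, specifically the asynchronous switching term $u(x(v(t)),r(v(t)))-u(x(t),r(t))$. It has no small-increment structure in $x$, and its control requires conditioning on the Markov chain over $[v(t),t]$ together with the uniform moment bound of Theorem \ref{youjie1}. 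The boundary terms of the functional at the switch times $nT+\theta$ also need care, since $V$ must remain continuous across them.
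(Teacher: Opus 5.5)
Your proposal is correct in outline and follows the paper's proof step for step:
\begin{itemize}
\item the same functional $U(x(t),r(t))+c\int_{-\delta}^{0}\int_{t+s}^{t}h(l)\,dl\,ds$ with $c=2L^2/\gamma_1$;
\item the same Young splitting against the $\gamma_1$-term, and the same jump-probability bound $1-e^{\min_i\gamma_{ii}\delta}$ for the asynchronous switching;
\item Lemma \ref{lemma} for the variable delay, used in integrated form through the block integrals $D_i$ rather than through a pointwise derivative;
\item Gronwall growth at rate $C_5$ on $[nT+\theta,(n+1)T)$, iteration over periods, and H\"older interpolation against (\ref{youjie112}).
\end{itemize}

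Two small corrections.

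First, the threshold $(1-\varepsilon/C_5)T$ comes from the per-period exponent $-\varepsilon T+C_5(T-\theta)$. This is obtained by keeping the weight $e^{\varepsilon t}$ over the whole period, so that $C_5$ is the growth rate of $e^{\varepsilon t}\mathbb{E}V$. Your exponent $-\varepsilon\theta+C_5(T-\theta)$ is instead negative iff $\theta>C_5T/(C_5+\varepsilon)$.

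Second, the paper multiplies the correction term by $I(t)$, so on uncontrolled intervals only the second line of (\ref{c21}) is needed. In your continuous version, the term $c\,\delta h(t)$, which contains $|f|^2$ and $|g|^2$, is not covered by that line. The price of the paper's choice is an upward jump of $V$ at each $t=nT$; the jump at $nT+\theta$ is downward and harmless. Your boundary concern really belongs at $t=nT$.
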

	\textbf{Proof.} Let $ U $ as in (\ref{test})
	while define $LU$ as in (\ref{suanzi}),	and define two  segments,
	$ \hat{x}_{t}:=\left\lbrace x\left( t+s\right) :-2\tau\leq s\leq 0\right\rbrace  $ and
	$ \hat{r}_{t}:=\left\lbrace r\left( t+s\right) :-2\tau\leq s\leq 0\right\rbrace  $  for $ t\geqslant0 $. For $  \hat{x}_{t} $ and $\hat{r}_{t}  $ to be well defined for $ 0\leq t<2\tau $. We set $ x(s) \equiv x(0) $ and $ r(s) \equiv r_{0} $ for $ s \in \left[ -2\tau,0\right). $
	The Lyapunov functional used in this proof will be of the form
	\begin{equation}\label{la}
		V(\hat{x}_{t},\hat{r}_{t},t)=U(x(t),r(t))+cI(t)\int^{0}_{-\delta}\int^{t}_{t+s}h(l)dlds
	\end{equation}
	and
$$h\left(l\right)=\delta\left|f\left(x(l),x_h(l),r\left(l\right),l\right)+u\left(x(v(l)),r\left(l\right),l\right)\right| ^{2}+\left|g\left(x(l),x_h(l),r\left(l\right),l\right)\right|^{2}$$
	where $ U $ has been defined  and $ c $ is a positive
	constant to be determined later while we set
	$$ f\left( x,y,i,l\right)=f\left(x,y,i,0\right),\quad g\left(x,y,i,l\right)=g\left(x,y,i,0\right),\quad u\left(x,i,l\right)=u\left(x,i,0\right)$$
	for $ \left(x,y,i,v\right) \in R^{n}\times R^{n}\times S\times\left[ -2\tau,0\right)$.

For $t\in[nT,nT+\theta),$ by the generalised It\^o formula (see, e.g., \cite{jiaocai}), we have
	\begin{equation}\label{35}
		dU(x(t),r(t))=\mathcal{L}U(x(t),x_h(t),x(v(t)),r(t),r(v(t)),t)dt+dM(t)
	\end{equation}
	for $ t\geqslant0 $, where $ M\left( t\right)  $ is a continuous local martingale with
	$ M\left( 0\right)  = 0 $ and $\mathcal{L}U  $ is defined by
	\begin{equation}\label{35.5}
		\begin{aligned}
			\mathcal{L}U(x,y,z,i,j,t)&=2\theta_{i}[x^{T}[f(x,y,i,t)+u(z,j,t)]+\frac{1}{2}|g(x,y,i,t)|^{2}]\\
	&\quad+(q_{1}+1)\bar{\theta_{i}}|x|^{q_{1}-1}[x^{T}[f(x,y,i,t)+u(z,j,t)]+\frac{1}{2}|g(x,y,i,t)|^{2}]\\&\quad+\frac{(q_{1}+1)(q_{1}-1)}{2}\bar{\theta_{i}}|x|^{q_{1}-3}|x^{T}g(x,y,i,t)|^{2}
+\sum_{j=1}^{N}\gamma_{ij}(\theta_{j}|x|^{2}+\bar{\theta_{j}}|x|^{q_{1}+1})
		\end{aligned}
	\end{equation}
	On the other hand, it is obvious that
	\begin{equation}\label{36}
		d\left(\int^{0}_{-\delta}\int^{t}_{t+s}h(l)dlds\right)=\left(\delta h(t)-\int^{t}_{t-\delta}h(s)ds\right)dt
	\end{equation}
	
	Furthermore, it is easy to see that
	\begin{equation}\label{37.5}
		\begin{aligned}
			&\mathcal{L}U(x,y,z,i,j,t)\leq \mathit{L}U(x,y,i,t)+[2\theta_{i}+(q_{1}+1)\bar{\theta_{i}}|x|^{q_{1}-1}]x^{T}[u(x,i,t)-u(z,j,t)]
		\end{aligned}
	\end{equation}
	where the function $\mathit{L}U $ has been defined by \eqref{suanzi}. Then it follows that
	\begin{equation}\label{38}
		e^{\varepsilon t}\mathbb{E}V(\hat{x}_{t},\hat{r}_{t},t)\le\mathbb{E}V(\hat{x}_{0},\hat{r}_{0},0)+\int_{0}^{t}e^{\varepsilon s}\mathbb{E}(\varepsilon V(\hat{x}_{s},\hat{r}_{s},s)+\mathbb{L}V(\hat{x}_{s},\hat{r}_{s},s))ds
	\end{equation}
	where
	\begin{equation}\label{39}
		\begin{aligned}
			\mathbb{L}V(\hat{x}_{t},\hat{r}_{t},t)
			&=\mathit{L}U(x(t),x_h(t),r(t),t)+cI(t)\left(\delta h(t)-\int^{t}_{t-\delta}h(s)ds\right)\\&\quad+[2\theta_{r(t)}+(q_{1}+1)\bar{\theta}_{r(t)}|x(t)|^{q_{1}-1}]x^{T}(t)[u(x(v(t)),r(v(t)),t)-u(x(t),r(t),t)].
\end{aligned}
	\end{equation}
	Moreover, by Theorem \ref{youjie1} and Assumptions \ref{polynomial} and \ref{u1}, it is
	straightforward to see that
	$	\sup_{0\le t < \infty}\mathbb{E}\left| \mathbb{L}V\left( \hat{x}_{t},\hat{r}_{t},t\right) \right| <\infty$.
	Let us now estimate $ \mathbb{L}V\left( \hat{x}_{t},\hat{r}_{t},t\right).$ Let $c=\frac{2L^{2}}{\gamma_{1}} $. By Assumption \ref{u1}, we have
	\begin{equation}\label{41}
		\begin{aligned}
			&\quad\mathbb{E}[2\theta_{r(t)}+(q_{1}+1)\bar{\theta}_{r(t)}|x(t)|^{q_{1}-1}]x^{T}(t)[u(x(v(t)),r(v(t)),t)-u(x(t),r(t),t)]\\
			&
			\leq \gamma_1\mathbb{E}[2\theta_{r(t)}|x(t)|+(q_{1}+1)\bar{\theta}_{r(t)}|x(t)|^{q_{1}}]^{2}+\frac{L^{2}}{2\gamma_{1}}\mathbb{E}|x(t)-x(v(t))|^{2} \\&
\quad+ \frac{1}{2\gamma_{1}}\mathbb{E}|u(x(v(t)),r(v(t)),t)-u(x(v(t)),r(t),t)|^{2}\\&
\le \gamma_1 \mathbb{E}[2\theta_{r(t)}|x(t)|+(q_{1}+1)\bar{\theta}_{r(t)}|x(t)|^{q_{1}}]^{2}+\frac{L^{2}}{2\gamma_{1}}\mathbb{E}|x(t)-x(v(t))|^{2} \\&
\quad+ \frac{1}{2\gamma_1}(1-\rho(t-v(t)))\mathbb{E}|x(v(t))|^{2}.
		\end{aligned}
	\end{equation}
where $\rho(s):=e^{\min_{i\in S}{\gamma_{ii}}s}.$ We have used Lemma 2 of \cite{yu2023stability} and Lemma 3 of \cite{SLZM} in the last inequality.
Then
\begin{equation}\label{41'}
		\begin{aligned}
			&\quad\mathbb{E}[2\theta_{r(t)}+(q_{1}+1)\bar{\theta}_{r(t)}|x(t)|^{q_{1}-1}]x^{T}(t)[u(x(v(t)),r(v(t)),t)-u(x(t),r(t),t)]\\&
\le \gamma_1 \mathbb{E}[2\theta_{r(t)}|x(t)|+(q_{1}+1)\bar{\theta}_{r(t)}|x(t)|^{q_{1}}]^{2}+\frac{L^{2}}{2\gamma_{1}}\mathbb{E}|x(t)-x(v(t))|^{2} \\&
\quad+ \frac{1}{2\gamma_1}(-\min_{i\in S}{\gamma_{ii}}\delta)\mathbb{E}|x(v(t))|^{2}.
		\end{aligned}
	\end{equation}

	By Condition (4.3), we also have
	\begin{equation}\label{42}
		\frac{4L^2\delta^{2}}{\gamma_1}\leq\gamma_{2}\quad \textrm{and}\quad \frac{2L^2\delta}{\gamma_1}\leq\gamma_{3}.
	\end{equation}
	It then follows from \eqref{39} along with Condition 4.2 and
	inequality \eqref{2.10} that
	\begin{equation}\label{43}
		\begin{aligned}
			&\quad\mathbb{E}\mathbb{L}V(\hat{x}_{t},\hat{r}_{t},t) \\
			&\leq \mathbb{E}\mathit{L}U(x(t),x_h(t),r(t),t)+\gamma_1 \mathbb{E}[2\theta_{r(t)}|x(t)|+(q_{1}+1)\bar{\theta}_{r(t)}|x(t)|^{q_{1}}]^{2}\\&\quad+\frac{L^{2}}{2\gamma_{1}}\mathbb{E}|x(t)-x(v(t))|^{2}-\frac{\min_{i\in S}{\gamma_{ii}}\delta}{2\gamma_{1}}\mathbb{E}|x(v(t))|^{2}+\frac{2L^2}{\gamma_1}\mathbb{E}\left(\delta h(t)-\int^{t}_{t-\delta}h(s)ds\right)\\&
\le \mathbb{E}\mathit{L}U(x(t),x_h(t),r(t),t)+\gamma_1\mathbb{E}[2\theta_{r(t)}|x(t)|+(q_{1}+1)\bar{\theta}_{r(t)}|x(t)|^{q_{1}}]^{2}\\&
\quad+\frac{2L^2}{\gamma_1}\left(2\delta^2|f(x(t),x_h(t),r(t),t)|^2+\delta|g(x(t),x_h(t),r(t),t)|^2+2\delta^2L^2\mathbb{E}|x(v(t))|^{2}\right)\\&
\quad+\frac{L^{2}}{2\gamma_{1}}\mathbb{E}|x(t)-x(v(t))|^{2}-\frac{\min_{i\in S}{\gamma_{ii}}\delta}{2\gamma_{1}}\mathbb{E}|x(v(t))|^{2}dt-\frac{2L^2}{\gamma_1}\mathbb{E}\int^{t}_{t-\delta}h(s)ds
\\&\leq -\left(\gamma_{4}-\frac{-\min_{i\in S}{\gamma_{ii}}\delta+8L^4\delta^2}{\gamma_{1}}\right)
			\mathbb{E}|x(t)|^{2}+\gamma_{5}\mathbb{E}|x_h(t)|^{2}-\mathbb{E}W(x(t))+\gamma_6\mathbb{E}W(x_h(t))\\
			&\quad-\frac{L^2+2\min_{i\in S}{\gamma_{ii}}\delta-16L^4\delta^2}{\gamma_1}\mathbb{E}\int^{t}_{t-\delta}h(s)ds.
		\end{aligned}
	\end{equation}
We have used the fact that
$$h(t)\le 2\delta|f(x(t),x_h(t),r(t),t)|^2+2\delta L^2|x(v(t))|^2+|g(x(t),x_h(t),r(t),t)|^2$$
in the second inequality and
$$\mathbb{E}|x(v(t))|^{2}\le 2\mathbb{E}|x(t)|^{2}+2\mathbb{E}|x(t)-x(v(t))|^{2}$$
$$\mathbb{E}|x(t)-x(v(t))|^{2}\le 2\mathbb{E}\int_{v(t)}^th(s)ds\le 2\mathbb{E}\int_{t-\delta}^th(s)ds$$
in the last inequality.

Moreover, we clearly have
	\begin{equation}\label{66}
		\mathbb{E}|x(s)|^{q_{1}+1}\leq \mathbb{E}|x(s)|^{2}+\mathbb{E}|x(s)|^{p+q_{1}-1}\le \mathbb{E}|x(s)|^{2}+\frac{1}{\gamma_7}\mathbb{E}W(x(s)).
	\end{equation}

Then for any $\varepsilon>0$ and $t\in[nT,nT+\theta),$ we have
\begin{equation}\label{43'}
		\begin{aligned}
			&\quad\mathbb{E}(\varepsilon V(\hat{x}_{t},\hat{r}_{t},t)+\mathbb{L}V(\hat{x}_{t},\hat{r}_{t},t)) \\
			&\leq \varepsilon \frac{2L^2}{\gamma_1}\int^{0}_{-\delta}\int^{t}_{t+s}h(l)dlds-\left(\gamma_{4}-\frac{8L^4\delta^2-\min_{i\in S}{\gamma_{ii}}\delta}{\gamma_{1}}-\varepsilon(a_2+a_3)\right)
			\mathbb{E}|x(t)|^{2}+\gamma_{5}\mathbb{E}|x_h(t)|^{2}\\&\quad-(1-\frac{\varepsilon a_3}{\gamma_7})\mathbb{E}W(x(t))+\gamma_6\mathbb{E}W(x_h(t))\\
			&\quad-\frac{L^2+2\min_{i\in S}{\gamma_{ii}}\delta-16L^4\delta^2}{\gamma_1}\mathbb{E}\int^{t}_{t-\delta}h(s)ds\\&
\le -\left(\gamma_{4}-\frac{8L^4\delta^2-\min_{i\in S}{\gamma_{ii}}\delta}{\gamma_{1}}-\varepsilon(a_2+a_3)\right)
			\mathbb{E}|x(t)|^{2}+\gamma_{5}\mathbb{E}|x_h(t)|^{2}\\&\quad-(1-\frac{\varepsilon a_3}{\gamma_7})\mathbb{E}W(x(t))+\gamma_6\mathbb{E}W(x_h(t))\\
			&\quad-\left(\frac{L^2+2\min_{i\in S}{\gamma_{ii}}\delta-16L^4\delta^2}{\gamma_1}- \frac{2L^2\varepsilon\delta}{\gamma_1}\right)\mathbb{E}\int^{t}_{t-\delta}h(s)ds.
		\end{aligned}
	\end{equation}
Here
\begin{equation}\label{c33}
			a_{2}=\max_{i \in S}\theta_{i},a_{3}=\max_{i \in S}\bar \theta_{i}.
		\end{equation}

Note that (\ref{c31}) implies
$$L^2+2\min_{i\in S}{\gamma_{ii}}\delta-16L^4\delta^2>0.$$

Then we can choose $0<\varepsilon\le \frac{L^2+2\min_{i\in S}{\gamma_{ii}}\delta-16L^4\delta^2}{2L^2\delta}$ such that
$$-\left(\frac{L^2+2\min_{i\in S}{\gamma_{ii}}\delta-16L^4\delta^2}{\gamma_1}- \frac{2L^2\varepsilon\delta}{\gamma_1}\right)\le 0.$$

Then we have
\begin{equation}\label{43''}
		\begin{aligned}
			&\quad\mathbb{E}(\varepsilon V(\hat{x}_{t},\hat{r}_{t},t)+\mathbb{L}V(\hat{x}_{t},\hat{r}_{t},t)) \\
			&\le-\left(\gamma_{4}-\frac{8L^4\delta^2-\min_{i\in S}{\gamma_{ii}}\delta}{\gamma_{1}}-\varepsilon(a_2+a_3)\right)
			\mathbb{E}|x(t)|^{2}+\gamma_{5}\mathbb{E}|x_h(t)|^{2}\\&\quad-(1-\frac{\varepsilon a_3}{\gamma_7})\mathbb{E}W(x(t))+\gamma_6\mathbb{E}W(x_h(t)).
		\end{aligned}
	\end{equation}
We have used the fact that
$$\int^{0}_{-\delta}\int^{t}_{t+s}h(l)dlds\le\delta\int^{t}_{t-\delta}h(s)ds$$
in (\ref{43''}).

So for any $t\in[nT,nT+\theta),$
\begin{equation}\label{44}
		\begin{aligned}
			&\quad e^{\varepsilon t}\mathbb{E}V(\hat{x}_{t},\hat{r}_{t},t)\\
			&\le e^{\varepsilon nT}\mathbb{E}V(\hat{x}_{nT},\hat{r}_{nT},nT)+(\gamma_5\vee\gamma_6)h^*e^{\varepsilon\tau}\int_{nT-\tau}^{nT}e^{\varepsilon s}(\mathbb{E}|x(s)|^{2}+\mathbb{E}W(x(s)))ds\\&\quad-
\left(\gamma_{4}-\frac{8L^4\delta^2-\min_{i\in S}{\gamma_{ii}}\delta}{\gamma_{1}}-\varepsilon(a_2+a_3)-\gamma_{5}h^*e^{\varepsilon \tau}\right)\int_{nT}^{t}e^{\varepsilon s}\mathbb{E}|x(s)|^{2}ds\\
			&\quad-\left(1-\gamma_{6}h^*e^{\varepsilon \tau}-\frac{\varepsilon a_{3}}{\gamma_7}\right)\int_{nT}^{t}e^{\varepsilon s}\mathbb{E}W(x(s))ds.
		\end{aligned}
	\end{equation}

On the other hand, for $t\in[nT+\theta,(n+1)T),$ we have
\begin{equation}\label{43'''}
		\begin{aligned}
			\mathbb{E}(\varepsilon V(\hat{x}_{t},\hat{r}_{t},t)+\mathbb{L}V(\hat{x}_{t},\hat{r}_{t},t))&\le \mathbb{E}(\varepsilon U(x_{t},r_{t})+L'U(x(t),x_h(t),r(t),t)) \\
			&\le\left(\gamma'_{4}+\varepsilon(a_2+a_3)\right)
			\mathbb{E}|x(t)|^{2}+\gamma'_{5}\mathbb{E}|x_h(t)|^{2}\\&\quad-(1-\frac{\varepsilon a_3}{\gamma_7})\mathbb{E}W(x(t))+\gamma'_6\mathbb{E}W(x_h(t)).
		\end{aligned}
	\end{equation}

Let $\theta=m\tau, T=(m+m')\tau$. Denote
$$D_{i}(\varepsilon)=\int_{(i-1)\tau}^{i\tau}e^{\varepsilon s}(\mathbb{E}|x(s)|^{2}+\mathbb{E}(W(x(s))))ds.$$

Then (\ref{44}) implies that for any $t\in[nT,nT+\theta),$
\begin{equation}\label{440}
		\begin{aligned}
			&\quad e^{\varepsilon t}\mathbb{E}V(\hat{x}_{t},\hat{r}_{t},t)+C_4\sum_{i=n(m+m')+1}^{[\frac{t}{\tau}]}D_i\le e^{\varepsilon nT}\mathbb{E}V(\hat{x}_{nT},\hat{r}_{nT},nT)+C_4D_{n(m+m')}.
		\end{aligned}
	\end{equation}

And (\ref{43'''}) implies for any $t\in[nT+\theta,(n+1)T),$
\begin{equation}\label{440''}\aligned &\quad e^{\varepsilon t}\mathbb{E}V(\hat{x}_{t},\hat{r}_{t},t)+C_4\sum_{i=n(m+m')+m+1}^{[\frac{t}{\tau}]}D_i\\&\le e^{\varepsilon (nT+\theta)}\mathbb{E}V(\hat{x}_{nT+\theta},\hat{r}_{nT+\theta},nT+\theta)+C_4D_{n(m+m')+m}\\&\quad+C_5
			\int_{nT+\theta}^te^{\varepsilon s}\mathbb{E}|x(s)|^{2}ds\endaligned\end{equation}
where by Condition \ref{c3}  we can choose $\varepsilon$ sufficiently small such that
$$\aligned&
C_1:=\gamma_{4}-\frac{8L^4\delta^2-\min_{i\in S}{\gamma_{ii}}\delta}{\gamma_{1}}-\varepsilon(a_2+a_3)-\gamma_{5}h^*e^{\varepsilon\tau}>0,\\&
C_2:=1-\gamma_{6}h^*e^{\varepsilon\tau}-\frac{\varepsilon a_{3}}{\gamma_7}>0,\\&
C_3:=1-\frac{\varepsilon a_3}{\gamma_7}-\gamma'_6h^*e^{\varepsilon\tau}>0,
\endaligned$$
$$C_4:=(\gamma_5\vee\gamma_6\vee\gamma'_{5}\vee\gamma'_6)h^*e^{\varepsilon\tau}\le C_1\wedge C_2\wedge C_3$$
and
$$C_5:=\gamma'_{4}+\varepsilon(a_2+a_3)+(\gamma'_{5}-\gamma'_6)h^*e^{\varepsilon\tau}+1-\frac{\varepsilon a_3}{\gamma_7}>0.$$

Specially, for $t\in [0,\theta),$
	\begin{equation}\label{45}
		\aligned &\quad e^{\varepsilon t}\mathbb{E}V(\hat{x}_{t},\hat{r}_{t},t)+C_4\sum_{i=1}^{[\frac{t}{\tau}]}D_i\le C\endaligned
	\end{equation}
	where $C=V(\hat{x}_{0},\hat{r}_{0},0)+C_4D_0<\infty$.

Then we have
$$e^{\varepsilon \theta}\mathbb{E}V(\hat{x}_{\theta},\hat{r}_{\theta},\theta)+C_4D_m\le C.$$

Then (\ref{440''}) and Gronwal Lemma implies that for any $t\in[nT+\theta,(n+1)T),$
\begin{equation}\label{441}\aligned &\quad e^{\varepsilon t}\mathbb{E}V(\hat{x}_{t},\hat{r}_{t},t)+C_4\sum_{i=n(m+m')+m+1}^{[\frac{t}{\tau}]}D_i\\&\le \left(e^{\varepsilon (nT+\theta)}\mathbb{E}V(\hat{x}_{nT+\theta},\hat{r}_{nT+\theta},nT+\theta)+C_4D_{n(m+m')+m}\right)e^{C_5(t-(nT+\theta))}.\endaligned\end{equation}

Specially, for $n=0$, we have
$$\aligned e^{\varepsilon T}\mathbb{E}V(\hat{x}_{T},\hat{r}_{T},T)+C_4D_{m+m'}&
\le \left(e^{\varepsilon \theta}\mathbb{E}V(\hat{x}_{\theta},\hat{r}_{\theta},\theta)+C_4D_{m}\right)e^{C_4(T-\theta)}\\&
\le (\mathbb{E}V(\hat{x}_{0},\hat{r}_{0},0)+C_4D_{0})e^{C_5(T-\theta)}\\&
\le Ce^{C_5(T-\theta)}.\endaligned$$

Then for $t\in[T,T+\theta),$ (\ref{440}) yields
$$ \aligned  e^{\varepsilon t}\mathbb{E}V(\hat{x}_{t},\hat{r}_{t},t)+C_4\sum_{i=m+m'+1}^{[\frac{t}{\tau}]}D_i
&\le e^{\varepsilon T}\mathbb{E}V(\hat{x}_{T},\hat{r}_{T},T)+C_4D_{m+m'}\\&
\le(\mathbb{E}V(\hat{x}_{0},\hat{r}_{0},0)+C_4D_{0})e^{C_5(t-\theta)}\\&
\le Ce^{C_5(T-\theta)}\endaligned$$
and
$$ \aligned e^{\varepsilon (T+\theta)}\mathbb{E}V(\hat{x}_{T+\theta},\hat{r}_{T+\theta},T+\theta)+C_4D_{2m+m'}&
\le e^{\varepsilon T}\mathbb{E}V(\hat{x}_{T},\hat{r}_{T},T)+C_4D_{m+m'}\\&
\le (\mathbb{E}V(\hat{x}_{0},\hat{r}_{0},0)+C_4D_{0})e^{C_5(T-\theta)}\\&
\le Ce^{C_5(T-\theta)},\endaligned$$
and for any $t\in[T+\theta,2T),$ we have
$$ \aligned  e^{\varepsilon t}\mathbb{E}V(\hat{x}_{t},\hat{r}_{t},t)+C_4\sum_{i=m+m'+1}^{[\frac{t}{\tau}]}D_i&
\le (e^{\varepsilon (T+\theta)}\mathbb{E}V(\hat{x}_{T+\theta},\hat{r}_{T+\theta},T+\theta)+C_4D_{2m+m'})e^{C_4(t-T-\theta)}\\&
\le (\mathbb{E}V(\hat{x}_{0},\hat{r}_{0},0)+C_3D_{m})e^{C_5(T-\theta)+C_5(t-\theta)}\\&
\le Ce^{2C_5(T-\theta)}.\endaligned$$

Repeating this procedure, for any $t\in[nT,(n+1)T),$
\begin{equation}\label{zuizhong}\aligned &\quad e^{\varepsilon t}\mathbb{E}V(\hat{x}_{t},\hat{r}_{t},t)+C_4\sum_{i=n(m+m')+m+1}^{[\frac{t}{\tau}]}D_i\le Ce^{nC_5(T-\theta)}.\endaligned\end{equation}

Then for any $0<\mu<\varepsilon$
$$\aligned a_1\mathbb{E}|x(t)|^2&\le\mathbb{E}V(\hat{x}_{t},\hat{r}_{t},t)\\&
\le  Ce^{nC_5(T-\theta)-\varepsilon t}\\&
=Ce^{-\mu t}e^{C_5n(T-\theta)-(\varepsilon-\mu) t}\\&
\le Ce^{-\mu t}e^{C_5n(T-\theta)-n(\varepsilon-\mu)T}\endaligned$$
where $a_1:=\min_{i\in S}\theta_i.$

If $\theta>(1-\frac{\varepsilon}{C_5})T,$ then we can choose $\mu=\varepsilon-C_5(1-\frac{\theta}{T})$ (obviously, $0<\mu<\varepsilon$) such that
\begin{equation}\label{70}\mathbb{E}|x(t)|^2\le \frac{C}{a_1}e^{-\mu t}.\end{equation}

Moreover, by H\"{o}lder inequality, Theorem \ref{youjie1} and \eqref{70}, we get
	\begin{equation}\label{71}
		\begin{aligned}
			\mathbb{E}|x(t)|^{\bar{q}}&\leq  (\mathbb{E}|x(t)|^{2})^{\frac{q-\bar{q}}{q-2}}
			\cdot(\mathbb{E}|x(t)|^{q})^{\frac{\bar{q}-2}{q-2}}\leq
			\left(\frac{C}{a_1}e^{-\mu t}\right)^{\frac{q-\bar{q}}{q-2}}\cdot C'^{\frac{\bar{q}-2}{q-2}} \\
			&\leq C'^{\frac{\bar{q}-2}{q-2}}\cdot \left(\frac{C}{a_1}\right)^{\frac{q-\bar{q}}{q-2}}\cdot e^{-\frac{q-\bar{q}}{q-2}\mu t}.\\
		\end{aligned}
	\end{equation}
where $C':=\sup_{t\ge0}\mathbb{E}|x(t)|^q<\infty.$

The proof is complete.  \hfill$\Box$

\section{An example}
	
Now let us present an example to illustrate our theory.
	
\textbf{Example} Consider the follow equation
\begin{equation}\label{sde1}dx(t)
		=f(x(t),x(t-h(t)),r(t),t)dt+g(x(t),x(t-h(t)),r(t),t)dB(t)
	\end{equation}
where $r(t)$ is a Markov chain with state space $S=\{1,2\}$ and generator $\Gamma = \begin{pmatrix} -2 & 2 \\ 1 & -1 \end{pmatrix}$.
\[
f(x, y, 1, t) = 0.5x - 12x^3 + 0.2y + 0.5y^3
\]
\[
f(x, y, 2, t) = 0.8x - 15x^3 + 0.4y + 0.8y^3
\]
\[
g(x, y, 1, t) = 0.4y + 0.5y^2
\]
\[
g(x, y, 2, t) = 0.5y + 0.6y^2
\]
\[
h(t) = 0.15 + 0.05 \sum_{k=0}^\infty (-1)^k (t-k)I_{[k,k+1)}(t).
\]

It is easy to verify that
$$xf(x,y,i,t)+3|g(x,y,i,t)|^2\le |x|^2+1.85|y|^2-11.875|x|^4+2.58|y|^4,$$
i.e. Assumption \ref{Khas} holds for $p=4, q=7, q_1=q_2=3, q_3=q_4=2$ with $K=1.85, \alpha_1=11.875$ and $\alpha_2=2.58.$ Obviously, this condition can not guarantee the stability of $x(t)$.

Now let	us consider the control function
\[
u(x, 1, t) = -8x,\quad u(x, 2, t) = -9x
\]	
and the controlled system
\begin{equation}\label{jsde1}\aligned dx(t)
		&=(f(x(t),x(t-h(t)),r(t),t)+u(x(v(t)),r(v(t)),t)I(t))dt\\&
\quad+g(x(t),x(t-h(t)),r(t),t)dB(t)\endaligned
	\end{equation}
where $I(t)=\sum_{k=1}^\infty I_{[k,k+\theta)}(t), \theta\in(0,1).$

Then it follows that Condition 4.1 holds for
$$k_{11} = -7.4, \, l_{11} = 0.26, \, \beta_{11} = 11.875, \, \gamma_{11} = 0.625,$$
$$k_{21} = -7.4, \, l_{21} = 0.58, \, \beta_{21} = 11.875, \, \gamma_{21} = 1.125,$$
$$k_{12} = -8, \, l_{12} = 0.45, \, \beta_{12} = 14.8, \, \gamma_{12} = 0.96,$$
$$k_{22} = -8, \, l_{22} = 0.95, \, \beta_{22} = 14.8, \, \gamma_{22} = 1.68.$$

We can also compute
\[
\left( \theta_1, \theta_2 \right)^T = (0.067, 0.063)^T,\quad
\left( \bar{\theta}_1, \bar{\theta}_2 \right)^T = (0.0336, 0.0313)^T.
\]

Now let $U, LU, L'U$ be defined as in (\ref{test}), (\ref{LU}) and  (\ref{L'U}). Then for \(i=1\), we can choose $\gamma_i>0, i=1,2,3$ sufficiently small such that all the coefficients of the crossing terms are positive, so
\begin{align*}
& LU(x,y,1,t)+ \gamma_1 \bigl(2\theta_1|x| + 4\bar{\theta}_1|x|^3\bigr)^2 + \gamma_2 |f(x,y,1,t)|^2 + \gamma_3 |g(x,y,1,t)|^2 \\
= &\; (-1.013 + 0.017956\gamma_1 + 0.25\gamma_2)x^2 \\
&+ (-2.6206 + 0.0360192\gamma_1 - 12\gamma_2)x^4 \\
&+ (-1.6128 + 0.01806336\gamma_1 + 144\gamma_2)x^6 \\
&+ (0.0268 + 0.2\gamma_2)xy + (0.067 + 0.5\gamma_2)x y^3 \\
&+ (0.02688 - 4.8\gamma_2)x^3 y+ (0.0672 - 12\gamma_2)x^3 y^3 \\
&+ 0.032256 x^2 y^2 + 0.08064 x^2 y^3+ 0.0504 x^2 y^4 \\
&+ (0.01072 + 0.04\gamma_2 + 0.16\gamma_3)y^2 + (0.0268 + 0.4\gamma_3)y^3 \\
&+ (0.01675 + 0.2\gamma_2 + 0.25\gamma_3)y^4 + 0.25\gamma_2 y^6\\
\le &\; \left(-1.013 + 0.017956\gamma_1 + 0.25\gamma_2+\frac{0.0268 + 0.2\gamma_2}{2}\right)x^2 \\
&+ \left[-2.6206 + 0.0360192\gamma_1 - 12\gamma_2+\frac{0.067 + 0.5\gamma_2}{4}\right.\\&
\left.\quad+\frac{3(0.02688 - 4.8\gamma_2)}{4}+\frac{0.032256}{2}+0.02016\right]x^4 \\
&+ \left(-1.6128 + 0.01806336\gamma_1 + 144\gamma_2+\frac{0.0672 - 12\gamma_2}{2}+\frac{0.08064}{6}+0.0168\right)x^6 \\
&+ \left(0.01072 + 0.04\gamma_2 + 0.16\gamma_3+\frac{0.0268 + 0.2\gamma_2}{2}+\frac{0.0268 + 0.4\gamma_3}{2}\right)y^2  \\
&+ \left[0.01675 + 0.2\gamma_2 + 0.25\gamma_3+\frac{3(0.067 + 0.5\gamma_2)+0.02688 - 4.8\gamma_2}{4}\right.\\&
\left.\quad+\frac{0.032256}{2}+0.02016+\frac{0.0268 + 0.4\gamma_3}{2}\right]y^4 \\&
+ \left(0.25\gamma_2+\frac{0.0672 - 12\gamma_2}{2}+\frac{0.08064}{3}+0.0336\right) y^6\\&
=-(0.9996-0.017956\gamma_1-0.35\gamma_2)x^2+(0.03752 + 0.14\gamma_2 + 0.36\gamma_3) y^2\\&
\quad -(2.547402-0.0360192\gamma_1+15.475\gamma_2) x^4+(0.123408 - 0.625\gamma_2 + 0.45\gamma_3) y^4\\&
\quad-(1.54896-0.01806336\gamma_1-138\gamma_2) x^6+(0.09408 - 5.75\gamma_2) y^6
\end{align*}
where we have used $$x^2y^3\le \frac{1}{2}x^2(y^2+y^4)\le\frac{x^4+y^4}{4}+\frac{x^6+2y^6}{6}$$
in the first inequality.
	
Similarly, for \(i=2\),
\begin{align*}
&\quad LU(x,y,2,t) + \gamma_1 \bigl(2\theta_2|x| + 4\bar{\theta}_2|x|^3\bigr)^2 + \gamma_2 |f(x,y,2,t)|^2 + \gamma_3 |g(x,y,2,t)|^2 \\
&\le\left(-1.004 + 0.015876\gamma_1 + 0.96\gamma_2\right)x^2 + \left(0.05985 + 0.48\gamma_2 + 0.55\gamma_3\right)y^2 \\
&\quad+ \left(-2.799935 + 0.0315504\gamma_1 -32.68\gamma_2\right)x^4+ \left(0.181345 -1.4\gamma_2 + 0.66\gamma_3\right)y^4 \\
&\quad+ \left(-1.786604 + 0.01567504\gamma_1 + 213\gamma_2\right)x^6 + \left(0.113932 -11.36\gamma_2\right)y^6
\end{align*}	
and
$$\aligned &\quad L'U(x,y,1,t)\\&
\le 0.0724x^2 -1.472202x^4 -1.54896x^6 + 0.05768y^2 + 0.103248y^4+0.09408y^6,\endaligned $$
$$\aligned &\quad L'U(x,y,2,t)\\ &
\le 0.13x^2 -1.673135x^4 -1.786604x^6 + 0.05985y^2 + 0.181345y^4+0.132712y^6.\endaligned$$

Let $\gamma_1=1, \gamma_2=0.001, \gamma_3=0.002$. Then $-1.4\gamma_2 + 0.66\gamma_3\le 0,$
$$\aligned &\quad(2.547402-0.0360192\gamma_1+15.475\gamma_2)\wedge (2.799935-0.0315504\gamma_1+32.68\gamma_2)\\&
=2.5268578\ge  1.472202\endaligned$$
and
$$\frac{0.181345}{0.132712}\bigwedge\frac{0.181345-1.4\gamma_2 + 0.66\gamma_3}{0.113932 -11.36\gamma_2}=1.366489$$
$$\frac{1.472202}{1.54896-0.01806336\gamma_1-138\gamma_2}=1.056936.$$

Then let
$$\aligned W(x)&=1.472202|x|^4+(1.54896-0.01806336\gamma_1-138\gamma_2)|x|^6\\&
=1.472202|x|^4+1.39289664|x|^6.\endaligned$$

We can check that Condition \ref{c2} holds for
$$\gamma_4=0.9996-0.017956\gamma_1-0.35\gamma_2=0.981294,$$
$$\gamma_5=0.05985 + 0.48\gamma_2 + 0.55\gamma_3=0.06143,$$
$$\gamma_6=\frac{0.181345-1.4\gamma_2 + 0.66\gamma_3}{1.472202}\approx0.123112,$$
$$ \gamma_4'=0.13,\ \gamma_5'=0.05985,\ \gamma'_6=\frac{0.181345}{1.472202}\approx0.123160,$$
$$\gamma_7=1.54896-0.01806336\gamma_1-138\gamma_2=1.39289664$$
and
$$\gamma_8=3.021162-0.01806336\gamma_1-138\gamma_2=2.86509864.$$

Moreover, $$1\wedge\gamma_4=0.981294>0.273689\approx2(\gamma_5\vee\gamma_6\vee\gamma'_5\vee\gamma'_6)h^*$$ holds for such $\gamma_i, i=1,2,3$.

Since $h^*=\frac{20}{19}$ and $\tau=0.2$, then by Theorem \ref{dl1}, we can choose $\delta=10^{-5}$ such that Condition \ref{c3} holds. We can verify that if we choose $\varepsilon=1,$ then $C_1\wedge C_2\wedge C_3=0.8017>0.1583=C_4$ and $C_5\approx1.1251$. Then for any $\theta\in(1-\frac{\varepsilon}{C_5},1]=(0.1112,1]$, the controlled system (\ref{jsde1}) is exponentially stable in $L^q$ for any $q\le7$. Moreover, if we choose $\theta=0.2,$ the mean square exponential stability convergence rate is $\mu=\varepsilon-C_5(1-\theta)=0.0999$ and $L^q$ exponential stability convergence rate is $\frac{7-q}{5}\mu$. If we choose $\theta$ larger, e.g. $\theta=0.6,$ then the mean square exponential stability convergence rate can reach $0.9550$. That is, if $\theta$ is small (for example $\theta<0.1112$), we can not obtain the exponential stability of the controlled system, if $\theta>0.1112$ and become larger and larger, then the convergence rate is also larger, which indicates the proportion of control interval has positive relation to the convergence rate.

\end{document}